\newtheorem{thm}{Theorem}[section]
\newtheorem{lem}[thm]{Lemma}
\newtheorem{prop}[thm]{Proposition}
\newtheorem{cor}[thm]{Corollary}
\newtheorem{assu-nota}[thm]{Assumption--Notation}
\theoremstyle{remark}
\newtheorem{remark}{Remark}
\newtheorem{assu}[remark]{Assumption}
\newtheorem{example}[remark]{Example}
\newcommand{\N}{\mathbb N}
\newcommand{\pp}{\mathbb P}
\DeclareMathOperator{\Pic}{Pic}
\newcommand{\OO}{\mathcal{O}}
\numberwithin{equation}{section}
\def\Qed{\hfill\raisebox{.6ex}{\framebox[2.5mm]{}}\\[.15in]}
\title [ Even sets of $(-4)$-curves on rational surfaces ]
{ Even sets of $(-4)$-curves on rational surface
}
\author{ Mar\'ia Mart\'\i \ S\'anchez}
\thanks{ {2000 Mathematics Subject Classification:} 14J26, 14J17}
\begin{document}

\begin{abstract} We study rational surfaces having an even set of disjoint  $(-4)$-curves.   The properties of the surface $S$ obtained by considering the double cover branched on the even set  are studied.   It is shown, that  contrarily to what happens for even sets of $(-2)$-curves, the number
of curves in an even set of $(-4)$-curves is bounded (less or equal to 12).    The surface $S$ has always Kodaira dimension bigger or equal to zero and the cases of Kodaira dimension zero and one are completely characterized. Several examples of this situation are given.
\medskip

\end{abstract}
\maketitle

\section { Introduction}

Let $X$ be a smooth  surface.
A set of $\nu$ disjoint smooth rational curves $N_1,\ldots,N_{\nu}$ is called
an {\sl even set} if there exists $L\in \Pic(X)$ such that
 $2L\equiv N_1+\cdots+N_{\nu}$.  In this note we study  even sets of curves $N_1,\ldots,N_{\nu}$  where each $N_i$ is a $(-4)$-curve (i.e. a smooth rational curve with self-intersection $-4$) on rational surfaces.  We prove that, contrarily to what happens  for even sets of $(-2)$-curves (cf. \cite{dolga-men-par}), the number of curves in an even set of $(-4)$-curves is bounded.  More precisely we show that the maximal number  of curves in such a set is 12.

 Given an even set of smooth rational curves one can consider the double cover branched on these curves. For even sets of $(-2)$-curves on rational surfaces, such a double cover is again a rational surface (see \cite{dolga-men-par}). In contrast again  the double cover of a rational surface  branched on an even set of $(-4)$-curves has always Kodaira dimension $\geq 0$.  In this paper we characterize completely the even sets of $(-4)$-curves on rational surfaces, such that the corresponding double cover has Kodaira dimension 0 or 1.  More precisely we show that any even set of $(-4)$-curves  on a rational surface, whose corresponding double cover has Kodaira dimension 0 or 1, are components of fibres of a not relatively minimal elliptic fibration.
  We give examples for all the possible numbers of the $(-4)$-curves when the Kodaira dimension is 0. We do not know any examples for which the Kodaira dimension of the double cover is 2 and we conjecture this should not occur.

\

\noindent {\it Notation.}
 We work over the complex numbers. All varieties are projective algebraic. All the notation we use is standard in algebraic geometry.
We just recall the definition of the numerical invariants of a smooth surface $X$:  the self-intersection number $K^2_X$ of the canonical divisor $K_X$, the {\em geometric genus} $p_g(X):=h^0(K_X)=h^2(\OO_X)$, the {\em irregularity} $q(X):=h^0(\Omega^1_X)=h^1(\OO_X)$ and the {\em holomorphic Euler characteristic} $\chi(X):=1+p_g(X)-q(X)$.

A {\em $(-r)$-curve} on a surface $X$ is a smooth irreducible rational curve with self-intersection $-r$.
An {\em even set of $(-r)$-curves } is a disjoint union of $(-r)$-curves $C_1,...,C_n$ such that the divisor $C_1+...+C_n$ is divisible by 2 in $\Pic(X)$.

We do not distinguish between line bundles and divisors on a smooth
variety. Linear equivalence is denoted by $\equiv$ and numerical equivalence by $\sim$.

\medskip

\

\noindent {\it Acknowledgments.}
I am very grateful to  M. Mendes
Lopes for all her help.

The author is a collaborator of the Center for Mathematical Analysis, Geometry
and Dynamical Systems of Instituto Superior T´ecnico, Universidade
T´ecnica de Lisboa and was partially supported by FCT (Portugal) through program POCTI/FEDER,  Project
POCTI/MAT/44068/2002 and the doctoral grant SFRH/BD/17596/2004.

\

\section{General facts }

\

Throughout this section we make the following

\begin{assu}\label{hyp} $X$ is a smooth  projective rational surface and $C_1,...,C_n$ is an even set of
disjoint $(-4)$-curves on $X$. We denote by $L$ the divisor satisfying $C\equiv 2L$, where $C:=C_1+...+C_n$.
 \end{assu}

From now on to the end of this chapter, we denote by $X$ a surface in the conditions of Assumption \ref{hyp}.

 \begin{remark}\label{remark1}
 We can contract the curves $C_i$ obtaining a rational surface with $n$ quotient
 singularities of type $\frac{1}{4}(1,1)$.
 \end{remark}

\begin{prop}\label{propriedades} The divisor $L$ satisfies the following:

\begin{enumerate}

\item $h^0(X, L)=0$;
\item $K_XL+L^2=0$;
\item $(K_X+L)^2= K_X^2+n$.
\item  $1\leq h^0(X, 2K_X+C)\leq n$.
\item $1\leq h^0(X, K_X+L)\leq n$.

\end{enumerate}
\end{prop}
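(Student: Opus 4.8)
The plan is to first extract all the numerical data from adjunction and disjointness, which disposes of (2) and (3) outright, and then to feed this data into Riemann--Roch and a few restriction exact sequences for the cohomological statements (1), (4), (5). Since each $C_i$ is a smooth rational curve with $C_i^2=-4$, adjunction gives $K_X\cdot C_i=2$, and disjointness gives $C_i\cdot C_j=0$ for $i\neq j$. Hence $C^2=-4n$ and $K_X\cdot C=2n$, and from $C\equiv 2L$ one reads off $L^2=-n$, $K_X\cdot L=n$, and $L\cdot C_i=-2$. Statement (2) is then just $K_X\cdot L+L^2=n-n=0$, and (3) follows by expanding $(K_X+L)^2=K_X^2+2K_X\cdot L+L^2=K_X^2+n$. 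I would also record at this stage the three identities that drive the rest: $(K_X+L)\cdot C_i=0$, $(2K_X+C)\cdot C_i=0$, and the relation of classes $2K_X+C\equiv 2(K_X+L)$.

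For (1) and the lower bounds I would first check that $h^0(X,-L)=0$: an effective $D'\equiv -L$ would give $2D'+C\equiv 0$ with $2D'+C$ effective and nonzero, which is impossible. Statement (1) then follows, because an effective $D\equiv L$ would satisfy $D\cdot C_i=-2<0$, forcing every $C_i$ into $\mathrm{Supp}(D)$; since the $C_i$ are disjoint this yields $D\geq C$, so $D-C\equiv L-2L=-L$ is effective, contradicting $h^0(X,-L)=0$. For the lower bound in (5), Riemann--Roch together with (2) gives $\chi(K_X+L)=\chi(\OO_X)+\tfrac12(K_X+L)\cdot L=1$, while Serre duality gives $h^2(X,K_X+L)=h^0(X,-L)=0$; hence $h^0(X,K_X+L)\geq\chi(K_X+L)=1$. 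The lower bound in (4) is then immediate, since squaring a section of the effective class $K_X+L$ produces a nonzero section of $2K_X+C\equiv 2(K_X+L)$.

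The heart of the argument, and the step I expect to be the main obstacle, is the vanishing $h^0(X,K_X-L)=0$. Here I would use rationality: if $D\equiv K_X-L$ were effective, then adding it to an effective representative of $K_X+L$ (which exists by the previous paragraph) produces an effective divisor in the class $2K_X$, contradicting $h^0(X,2K_X)=0$ for a rational surface. Granting this, both upper bounds fall out of restriction sequences. For (5), using $K_X+L-C\equiv K_X-L$, the sequence
\[
0 \to \OO_X(K_X-L) \to \OO_X(K_X+L) \to \OO_C(K_X+L) \to 0
\]
gives $h^0(X,K_X+L)\leq h^0(X,K_X-L)+h^0(\OO_C(K_X+L))$; since $(K_X+L)\cdot C_i=0$, each $\OO_{C_i}(K_X+L)$ is trivial on $C_i\cong\pp^1$, so the boundary term equals $n$ and $h^0(X,K_X+L)\leq n$. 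For (4), the analogous sequence
\[
0 \to \OO_X(2K_X) \to \OO_X(2K_X+C) \to \OO_C(2K_X+C) \to 0
\]
gives $h^0(X,2K_X+C)\leq h^0(X,2K_X)+n=n$, again because $(2K_X+C)\cdot C_i=0$ makes each restriction trivial while $h^0(X,2K_X)=0$. The only genuinely delicate point throughout is the careful interplay between linear and numerical equivalence in the effectivity arguments; once $h^0(X,K_X-L)=0$ is in hand, everything else is bookkeeping with Riemann--Roch, adjunction, and the triviality of degree-zero line bundles on $\pp^1$.
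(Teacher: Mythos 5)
Your proposal is correct; every step checks out, and its overall skeleton matches the paper's: the adjunction/disjointness numerics dispose of (ii) and (iii), Riemann--Roch gives the lower bounds, and the restriction sequence $0\to\OO_X(2K_X)\to\OO_X(2K_X+C)\to\OO_C(2K_X+C)\to 0$ together with rationality of $X$ gives the upper bound in (iv), exactly as in the paper. You diverge in two places. For (i), the paper argues in one line: an effective $D\equiv L$ would put $2D$ in $|C|$, but $h^0(X,C)=1$ forces $2D=C$, impossible since $C$ is reduced; your route via $L\cdot C_i=-2<0$, forcing $D\geq C$ and then contradicting $h^0(X,-L)=0$, is equally valid but longer. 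For the upper bound in (v), the paper deduces it from (iv): multiplication by a fixed nonzero section of $K_X+L$ injects $H^0(X,K_X+L)$ into $H^0(X,2K_X+2L)=H^0(X,2K_X+C)$, so no new input is needed; you instead run a second restriction sequence with kernel $\OO_X(K_X-L)$, which requires the auxiliary vanishing $h^0(X,K_X-L)=0$. Your proof of that vanishing (adding an effective representative of $K_X+L$ to produce a section of $2K_X$, contradicting $P_2(X)=0$) is sound, so the argument goes through; what it buys is a self-contained proof of (v) and the extra fact $h^0(X,K_X-L)=0$, at the cost of one more lemma, whereas the paper's deduction is shorter. A minor point in your favor: you make explicit the Serre-duality vanishing $h^2(X,K_X+L)=h^0(X,-L)=0$ needed for the Riemann--Roch lower bound, which the paper leaves implicit.
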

\begin{proof}
Assertion (i) is obvious, because $2L\equiv C$, $h^0(X,C)=1$ and $C$  is
reduced. By assumption \ref{hyp}, we have $L^2=-n$ and $K_XL=n$,
this proves (ii) and (iii). Finally, by the Riemann-Roch Theorem, one has $h^0(X, K_X+L)\geq 1$ and thus the left side  of inequalities (iv) and (v). On the other hand
 by the long exact sequence obtained from
the exact sequence:
$$0 \to \mathcal{O}_X(2K_X) \to \mathcal{O}_X(2K_X+C) \to
\mathcal{O}_C((2K_X+C)\mid_C) \to 0,$$  one has $h^0(X, 2K_X+C)\leq n$ (and so also  $h^0(X, K_X+L)\leq n$)
because  $X$ is rational, $\mathcal{O}_C(2K_X+C)=\mathcal{O}_C$ and $h^0(C,\mathcal{O}_C)=n$.

\end{proof}
\begin{remark}\label{nef}

In what follows, we can assume that  $K_X + L$ is nef. Otherwise, since $K_X+L$
is effective, there is an irreducible curve $E$ such that $E^2< 0$
and $E(K_X+L)<0$. Since each curve $C_i$ satisfies $C_i(K_X+L)=0$,
$E$ is not one of the curves $C_i$.  So $EL\geq 0$ and thus  $EK_X<0$. Since $E$ is irreducible and $E^2<0$, the only possibility is that
$E$ is a (-1)-curve disjoint from $C$ and  so we can contract it, without changing the initial assumptions.
\end{remark}

\begin{remark}\label{remark2.1}
Nefness of $K_X+L$ implies that for each $(-1)$-curve $\theta$, there exists at least one $(-4)$-curve $C_i$ such that $\theta C_i>0$.

 \end{remark}

Our next goal is to describe the double cover of $X$ branched along $C=2L$.
Let $$\pi: S \to X$$ be a double cover branched along $C$. Then $S$ is a smooth surface and
by the double cover formulas (\cite{bpv}), we have
\begin{itemize}
\item $K_S= \pi^*(K_X + L)$;
\item $\mathcal{X}(\mathcal{O}_S)=2.$
\end{itemize}

\begin{remark}

The surface $S$, having $\mathcal{X}(\mathcal{O}_S)=2$ has Kodaira dimension $\geq 0$.
Since we are assuming that $K_X+L$ is nef and $K_S=\pi^*(K_X+L)$, also $K_S$ is nef and thus  $S$ is  minimal.
\end{remark}

\begin{lem}\label{lem-2KX}
Let $X$ be a rational surface with an even set $C$ of $(-4)$-curves. Then  $h^0(X, -2K_X)\leq 1$. Furthermore   $h^0(X, -2K_X)\neq 0$ if and only if the double cover $S\to X$
is  a  $K3$ surface.
\end{lem}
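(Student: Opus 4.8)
The plan is to reduce the whole lemma to a single numerical fact, namely the equivalence
$$h^0(X,-2K_X)\neq 0\quad\Longleftrightarrow\quad K_X+L\equiv 0\quad\Longleftrightarrow\quad S\ \text{is a }K3\ \text{surface},$$
and to read off the bound $h^0(X,-2K_X)\le 1$ as a by-product. I would organise everything around the class $K_X+L$, which we already understand well: by Proposition \ref{propriedades}(ii) it satisfies $(K_X+L)\cdot L=0$, each $C_i$ satisfies $(K_X+L)\cdot C_i=0$ (Remark \ref{nef}), and by Proposition \ref{propriedades}(v) it is effective.

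I would first treat the equivalence $K_X+L\equiv 0\Leftrightarrow S$ is $K3$. If $K_X+L\equiv 0$, then $K_S=\pi^*(K_X+L)=\OO_S$, so $p_g(S)=h^0(K_S)=1$; combined with $\mathcal{X}(\OO_S)=2$ the relation $\chi=1-q+p_g$ forces $q(S)=0$, and a smooth minimal surface with trivial canonical bundle and $q=0$ is a $K3$ surface. Conversely, if $S$ is $K3$ then $K_S\equiv 0$, and since $\pi$ is finite of degree $2$ we have $(K_X+L)\cdot\Gamma=\tfrac12\,K_S\cdot\pi^*\Gamma=0$ for every curve $\Gamma$ on $X$; thus $K_X+L$ is numerically trivial, and as $X$ is rational (so numerical and linear equivalence coincide) we conclude $K_X+L\equiv 0$.

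The heart of the argument is the equivalence $h^0(X,-2K_X)\neq 0\Leftrightarrow K_X+L\equiv 0$. If $K_X+L\equiv 0$, then $-2K_X\equiv 2L\equiv C$ is effective, and since $h^0(X,C)=1$ we even get $h^0(X,-2K_X)=1$. For the reverse implication — the step I expect to be the main obstacle — suppose $-2K_X\equiv D$ with $D\ge 0$ and pick an effective divisor $F_0\in|K_X+L|$, which exists by Proposition \ref{propriedades}(v). Then
$$2F_0+D\equiv 2(K_X+L)+(-2K_X)=2L\equiv C,$$
so $2F_0+D$ is an effective divisor numerically equivalent to $C$. Because $h^0(X,C)=1$, the only such divisor is $C$ itself, hence $2F_0+D=C$ as divisors. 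But $C$ is \emph{reduced}, so no component may occur with multiplicity $\ge 2$; since $F_0$ enters $2F_0+D$ with even coefficients, this forces $F_0=0$, i.e. $K_X+L\equiv 0$. The delicate point is exactly this use of the rigidity $h^0(X,C)=1$ together with the reducedness of $C$ to upgrade mere effectivity of $-2K_X$ into the \emph{linear} triviality of $K_X+L$; I note that this direction uses neither nefness of $K_X+L$ nor minimality of $S$.

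Finally I would assemble the pieces. Chaining the two equivalences gives $h^0(X,-2K_X)\neq 0\Leftrightarrow S$ is a $K3$ surface, which is the second assertion. For the bound, if $h^0(X,-2K_X)=0$ there is nothing to prove, and if $h^0(X,-2K_X)\neq 0$ the middle step shows $-2K_X\equiv C$, whence $h^0(X,-2K_X)=h^0(X,C)=1$; in either case $h^0(X,-2K_X)\le 1$.
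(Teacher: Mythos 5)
Your proof is correct, and although it rests on the same two pillars as the paper's proof --- the rigidity $h^0(X,C)=1$ of the reduced branch divisor and the effectivity of $K_X+L$ coming from Proposition \ref{propriedades}(v) --- it combines them along a genuinely different route. The paper first proves the bound $h^0(X,-2K_X)\le 1$ by multiplying sections of $-2K_X$ against a section of $2K_X+C$ inside $H^0(X,C)$; then, assuming $h^0(X,-2K_X)=1$, it uses $(-2K_X)\cdot C_i=-4<0$ to split off the fixed part $-2K_X=C+\Gamma$, deduces $\Delta+\Gamma\equiv 0$ with $\Delta\in|2K_X+C|$, hence $-2K_X\equiv C$, i.e.\ $2(K_X+L)\equiv 0$, and finally must invoke the absence of $2$-torsion in $\Pic(X)$ of a rational surface to conclude $L\equiv -K_X$ and $K_S=\OO_S$. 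Your decomposition $2F_0+D=C$, with $F_0\in|K_X+L|$ and $D\in|-2K_X|$, combined with reducedness of $C$, reaches $F_0=0$, i.e.\ $K_X+L\equiv 0$, in one stroke: you bypass both the fixed-part argument and the $2$-torsion argument, and the bound $h^0(X,-2K_X)\le 1$ drops out as a corollary rather than being established first. You also spell out the converse (if $S$ is a $K3$ then $K_X+L$ is numerically, hence linearly, trivial), which the paper compresses into ``the result follows'', and you correctly observe that nefness of $K_X+L$ is nowhere needed, which matters since the lemma is stated without that normalization. Two small wording points: in the key step $2F_0+D$ is \emph{linearly} (not merely numerically) equivalent to $C$, which is exactly what membership in $|C|$ requires (on a rational surface the two notions agree, so nothing is lost); and the components of $F_0$ appear in $2F_0+D$ with coefficient at least $2$ (not necessarily even, since $D$ may share components with $F_0$), which is the statement reducedness actually contradicts.
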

\begin{proof}
Notice that $h^0(X,C)=1$ and $h^0(X, 2K_X +C)\geq
 1$ by Proposition \ref{propriedades}. Thus we conclude that either $h^0(-2K_X)=0$ or
 $h^0(X, -2K_X)=1$ and
 $h^0(X, 2K_X+C)=1$. Since $K_XC_i=2$, if $h^0(-2K_X)=1$, then each $C_i$ is
 a component of $-2K_X$. So we can write $-2K_X= C + \Gamma$, where $\Gamma $ is an effective divisor and letting $\Delta= 2K_X +
 C$ we obtain $\Delta + C + \Gamma\equiv C$.  Hence $\Delta=\Gamma=0$
 namely $C=-2K_X$. Since $X$, being rational, has no 2-torsion,   also $L=-K_X$ and so $K_S=\OO_S$.  Thus $S$ having $p_g=1$, $\chi=2$  is a $K3$ surface.
Conversely, if $S$ is a $K3$ surface, $K_S=\OO_S$ and the result follows.
\end{proof}

Next, we apply the above results to the following proposition.
\begin{prop}\label{classification}
Let $X$ be a rational surface with an even set $C$ of  $n$ $(-4)$-curves and such that $K_X+L$ is nef.  Then  $-n\leq K_X^2\leq -1$.  Furthermore

\begin{enumerate}
\item  if $K_X^2=-n$, then
    \begin{itemize}
    \item[($i_a$)] $\kappa(S) = 0$ $\Rightarrow$ $S$ is a $K3$ surface, or
    \item[($i_b$)] $\kappa(S) = 1$ $\Rightarrow$ $S$ is an elliptic
    surface;
    \end{itemize}
\item if $K_X^2 > -n$, then $K_S ^2 \ge 2$ and $S$ is a surface of general type.
\end{enumerate}
\end{prop}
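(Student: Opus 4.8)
The plan is to transfer the whole problem to the numerical invariant $K_S^2$ of the double cover and then read off $\kappa(S)$ from the Enriques--Kodaira classification. Since $\pi$ has degree two we have $\pi^*D\cdot\pi^*D'=2\,D\cdot D'$, so combining $K_S=\pi^*(K_X+L)$ with Proposition \ref{propriedades}(iii) gives
\[
K_S^2=2(K_X+L)^2=2(K_X^2+n).
\]
Because $K_X+L$ is nef we have $(K_X+L)^2\ge 0$, that is $K_X^2\ge -n$, which is the lower bound; recall also that $S$ is minimal with $\chi(\OO_S)=2$ and $\kappa(S)\ge 0$.

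For the upper bound $K_X^2\le -1$ I would argue by contradiction using Lemma \ref{lem-2KX}. Riemann--Roch on the rational surface $X$ gives $\chi(-2K_X)=1+3K_X^2$, and since $h^2(-2K_X)=h^0(3K_X)=0$ (the plurigenera of a rational surface vanish), we get $h^0(-2K_X)\ge 1+3K_X^2$. If $K_X^2\ge 1$ this forces $h^0(-2K_X)\ge 4$, contradicting the bound $h^0(-2K_X)\le 1$ of Lemma \ref{lem-2KX}. If $K_X^2=0$ then $h^0(-2K_X)=1$, so Lemma \ref{lem-2KX} yields $C=-2K_X$; but then $C^2=4K_X^2=0$, whereas $C^2=\sum_i C_i^2=-4n<0$ since $n\ge 1$, a contradiction. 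Hence $K_X^2\le -1$.

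It remains to pin down $\kappa(S)$ in the two ranges, which is a direct application of the classification now that $K_S^2=2(K_X^2+n)$ is controlled. If $K_X^2=-n$, then $K_S^2=0$; a minimal surface of general type satisfies $K_S^2\ge 1$, so $\kappa(S)\ne 2$ and $\kappa(S)\in\{0,1\}$. When $\kappa(S)=1$ the minimal surface $S$ is necessarily (properly) elliptic, giving $(i_b)$. When $\kappa(S)=0$ the Enriques--Kodaira classification allows only K3, Enriques, bielliptic and abelian surfaces, with $\chi(\OO_S)$ equal to $2,1,0,0$ respectively; as $\chi(\OO_S)=2$, $S$ must be a $K3$ surface, giving $(i_a)$. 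If instead $K_X^2>-n$, then $K_X^2+n\ge 1$ and $K_S^2\ge 2$; here $K_S$ is nef with $K_S^2>0$, hence nef and big, so $\kappa(S)=2$ and $S$ is of general type.

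The main obstacle I expect is the upper bound, and specifically the borderline case $K_X^2=0$: the crude Riemann--Roch estimate only gives $h^0(-2K_X)\ge 1$ there, so one genuinely needs the extra geometric input of Lemma \ref{lem-2KX}, namely that a nonzero section of $-2K_X$ forces $C=-2K_X$, in order to reach the numerical contradiction $-4n=0$. The Kodaira-dimension dichotomy, by contrast, is essentially immediate once $K_S^2$ is determined.
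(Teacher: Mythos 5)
Your proposal is correct, and two of its three parts run along the same lines as the paper: the lower bound ($K_X+L$ nef and Proposition \ref{propriedades}(iii) give $(K_X+L)^2\geq 0$, i.e.\ $K_X^2\geq -n$) and the final dichotomy (read off from $K_S^2=2(K_X^2+n)$, minimality of $S$, $\chi(\OO_S)=2$ and the Enriques--Kodaira classification — you merely spell out what the paper dismisses as "clear"). The genuine difference is the upper bound $K_X^2\leq -1$. The paper's route: $h^0(-K_X)=0$, since otherwise the multiplication map $H^0(X,-K_X)\otimes H^0(X,K_X+L)\to H^0(X,L)$ would have nonzero image, contradicting $h^0(X,L)=0$; then Riemann--Roch applied to $-K_X$ (with $h^0(2K_X)=0$) gives $0=h^0(-K_X)\geq 1+K_X^2$ in one stroke, with no case distinction. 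You instead apply Riemann--Roch to $-2K_X$ and play $h^0(-2K_X)\geq 1+3K_X^2$ against Lemma \ref{lem-2KX}: this kills $K_X^2\geq 1$ numerically, but leaves the borderline case $K_X^2=0$, where you correctly invoke the geometric content of that lemma ($h^0(-2K_X)\neq 0$ forces $C\equiv -2K_X$, whence $0=4K_X^2=C^2=-4n$, absurd). One presentational caveat: $C\equiv -2K_X$ is established inside the \emph{proof} of Lemma \ref{lem-2KX}, not in its statement; if you want to cite only the statement, note that $S$ being K3 gives $\pi^*(K_X+L)\equiv 0$, hence $2(K_X+L)\equiv 0$ by pushing forward, hence $K_X+L\equiv 0$ because $\Pic(X)$ of a rational surface is torsion-free. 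On balance, the paper's argument is shorter and uniform and needs only Proposition \ref{propriedades}; yours costs a case split but ties the bound to the K3 alternative that the paper analyzes later, which is a coherent (if slightly heavier) alternative.
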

\begin{proof}

Since $K_X+L$ is effective and nef one has $(K_X +L)^2\geq 0$. So by Proposition \ref{propriedades} (iii)  $K_X^2\geq -n$. As we
have seen, one has $h^0(X, K_X+L)\geq 1$ and so since $h^0(X, L)$ must be $0$ we
 conclude that $h^0(-K_X)=0$, otherwise  the map

 $$H^0(X,-K_X)\otimes  H^0(X, K_X+L)\to H^0(X,L)$$
 would have nonzero image. So by the Riemann-Roch Theorem necessarily $K_X^2\leq -1$.

 The rest of the proposition is clear, by
 the classification of  minimal surfaces (see e.g.
\cite{beauville1}) and $K_S^2=2(K_X^2+n)$.
\end{proof}

Finally we recall  an important result due to Miyaoka.
\begin{prop}\cite{Miya}\label{Miyaoka}
The number $n$ of disjoint $(-2)$-curves on a
surface $W$ with $K_W$ nef satisfies $3c_2(W)-K_W^2\geq \frac{9}{2}n$.
\end{prop}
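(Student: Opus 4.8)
The plan is to realize the configuration of $(-2)$-curves as the exceptional locus of a resolution of rational double points and then invoke the orbifold form of the Bogomolov--Miyaoka--Yau inequality. First I would contract the $n$ disjoint $(-2)$-curves on $W$, obtaining a normal projective surface $\bar W$ whose singular locus consists of exactly $n$ rational double points of type $A_1$, i.e. nodes $\frac{1}{2}(1,1)$, one for each curve. Since $A_1$ singularities are canonical, the contraction $f\colon W\to\bar W$ satisfies $f^*K_{\bar W}=K_W$; hence $K_{\bar W}^2=K_W^2$. Moreover each $(-2)$-curve $C_i$ satisfies $K_W C_i=0$ by adjunction, so nefness of $K_W$ guarantees via the projection formula that $K_{\bar W}$ is nef as well. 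This places $\bar W$ precisely in the range where the Miyaoka--Yau inequality for surfaces with quotient singularities applies.

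The key input is then the orbifold inequality
$$K_{\bar W}^2\le 3\,e_{\mathrm{orb}}(\bar W),$$
where $e_{\mathrm{orb}}$ denotes the orbifold (V-manifold) Euler number. The next step is a purely local computation of $e_{\mathrm{orb}}$. Writing $E=\bigcup_p E_p$ for the exceptional locus of $f$, one has
$$e_{\mathrm{orb}}(\bar W)=c_2(W)-\sum_p\Bigl(e(E_p)-\tfrac{1}{|G_p|}\Bigr),$$
the sum running over the singular points $p$ with local fundamental group $G_p$, since $\bar W$ differs from $W$ by removing $E$ and re-inserting each singular point with weight $1/|G_p|$. For each $A_1$ point the exceptional fibre $E_p$ is a single $\pp^1$, so $e(E_p)=2$ and $|G_p|=2$, whence each point contributes $2-\tfrac12=\tfrac32$. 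This gives $e_{\mathrm{orb}}(\bar W)=c_2(W)-\tfrac32 n$.

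Substituting into the orbifold inequality and using $K_{\bar W}^2=K_W^2$ yields $K_W^2\le 3\bigl(c_2(W)-\tfrac32 n\bigr)=3c_2(W)-\tfrac92 n$, which rearranges to the desired $3c_2(W)-K_W^2\ge\tfrac92 n$. The genuine obstacle is the orbifold Bogomolov--Miyaoka--Yau inequality itself: this is the deep step and is exactly Miyaoka's contribution, resting on the theory of orbifold Chern numbers together with a stability/vanishing argument (or, analytically, on the existence of a K\"ahler--Einstein orbifold metric). Everything else — the contraction, the triviality of the discrepancies at $A_1$ points, and the bookkeeping of the local Euler-number corrections — is elementary once that inequality is granted. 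I would therefore present the argument as a reduction to \cite{Miya}, making explicit only the $A_1$ local contribution $\tfrac32$ that produces the coefficient $\tfrac92$.
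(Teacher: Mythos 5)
Your proposal is correct and is essentially the paper's own treatment: the paper gives no proof but simply cites Miyaoka, and your argument (contracting the disjoint $(-2)$-curves to $A_1$ points, noting the contraction is crepant so $K$ stays nef with the same self-intersection, and applying the orbifold Bogomolov--Miyaoka--Yau inequality with local contribution $2-\tfrac{1}{2}=\tfrac{3}{2}$ per node) is exactly the derivation contained in the cited work of Miyaoka. The bookkeeping yielding $e_{\mathrm{orb}}=c_2(W)-\tfrac{3}{2}n$ and hence $3c_2(W)-K_W^2\geq\tfrac{9}{2}n$ is accurate.
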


With this result we obtain:

\begin{lem}
If  $X$ is a rational surface with an even set $C$ of  $n$ $(-4)$-curves, then
$$n\leq 16.$$
Furthermore, if equality holds, then $K_S^2=0.$
\end{lem}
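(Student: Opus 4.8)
The plan is to derive the bound by applying Miyaoka's inequality (Proposition \ref{Miyaoka}) to the double cover $S\to X$, so that the whole argument reduces to producing enough disjoint $(-2)$-curves on $S$ and then carrying out a short numerical computation. First, by Remark \ref{nef} I may assume that $K_X+L$ is nef: the $(-1)$-curves contracted there are disjoint from $C$, so those contractions leave both the number $n$ and the self-intersections of the $C_i$ untouched, and hence do not affect the bound to be proved. Under this assumption $K_S=\pi^*(K_X+L)$ is nef, so $S$ is a minimal surface to which Proposition \ref{Miyaoka} applies, and in particular $K_S^2\geq 0$.

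The key geometric step is to identify the relevant $(-2)$-curves on $S$. For each branch component $C_i$ let $\tilde C_i:=\pi^{-1}(C_i)_{\mathrm{red}}$ denote the reduced preimage. Since $C_i$ lies in the branch locus, $\pi$ restricts to an isomorphism $\tilde C_i\to C_i$, so $\tilde C_i\cong\mathbb{P}^1$ and $\pi^*C_i=2\tilde C_i$. As $\pi$ has degree $2$ one has $(\pi^*C_i)^2=2\,C_i^2=-8$, whence $4\,\tilde C_i^2=-8$ and $\tilde C_i^2=-2$; thus each $\tilde C_i$ is a $(-2)$-curve on $S$. Because the $C_i$ are pairwise disjoint, so are their preimages $\tilde C_1,\dots,\tilde C_n$, and therefore $S$ carries $n$ disjoint $(-2)$-curves.

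It then remains to feed this into Miyaoka's inequality. By Noether's formula together with $\chi(\OO_S)=2$ one obtains $c_2(S)=24-K_S^2$, while $K_S^2=2(K_X^2+n)$ as recorded in the proof of Proposition \ref{classification}. Proposition \ref{Miyaoka} now reads
$$3c_2(S)-K_S^2 = 3(24-K_S^2)-K_S^2 = 72-4K_S^2 \;\geq\; \frac{9}{2}\,n.$$
Since $K_S^2\geq 0$, the left-hand side is at most $72$, so $\frac{9}{2}n\leq 72$, i.e. $n\leq 16$. Moreover, if $n=16$ then $\frac{9}{2}\cdot 16=72\leq 72-4K_S^2$ forces $K_S^2\leq 0$, hence $K_S^2=0$, which is exactly the equality statement.

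The only genuinely delicate point, and the step I would check most carefully, is the geometric claim that the reduced preimage of a branch component is a $(-2)$-curve: everything else is bookkeeping with Noether's formula and the double-cover identities already established. The degree-$2$ projection formula $(\pi^*D)^2=2D^2$ and the relation $\pi^*C_i=2\tilde C_i$ with $\tilde C_i\cong C_i$ must be applied correctly, since a factor or sign slip there would corrupt the final constant $16$. Once those are pinned down, both the inequality $n\leq 16$ and the equality case follow immediately.
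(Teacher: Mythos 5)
Your proof is correct and follows exactly the paper's route: pull back the $C_i$ to $n$ disjoint $(-2)$-curves on the minimal double cover $S$, use $\chi(\OO_S)=2$ and nefness of $K_S$ to get $c_2(S)\leq 24$, and apply Miyaoka's inequality. The paper's own proof is just a terser version of the same argument; your extra details (the nef reduction via Remark \ref{nef}, the computation $\tilde C_i^2=-2$, and the explicit equality case) fill in precisely what the paper leaves implicit.
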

\begin{proof}
Since  $\mathcal{X}(\mathcal{O}_S)=2$, one has $c_2(S)\leq 24$. On
the other hand, for each curve $C_i$, $\pi^{-1}(C_i)$ is a $(-2)$-curve
in $S$. Finally, applying Proposition \ref{Miyaoka} we obtain the result.
\end{proof}

 We will start by studying the case $K_X^2=-n$, this is the K3 and the elliptic case.

\


\section{The elliptic fibration}


\

In this section we want to prove the following:

\begin{prop}\label{thm_elliptic}
Let $X$ be as in Assumption \ref{hyp} and let $S$ be the associated double cover of $X$.  Then $\kappa(S)\leq 1$ if and only if $X$ has an elliptic fibration containing the $(-4)$-curves.
\end{prop}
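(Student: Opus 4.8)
The plan is to first translate the hypothesis $\kappa(S)\le 1$ into a numerical statement about $X$. By the classification in Proposition \ref{classification}, $\kappa(S)\le 1$ holds exactly when $K_X^2=-n$, equivalently (by Proposition \ref{propriedades}(iii)) when $(K_X+L)^2=0$, equivalently when $K_S^2=2(K_X+L)^2=0$. Throughout I write $D:=K_X+L$, which by Remark \ref{nef} may be assumed nef, is effective by Proposition \ref{propriedades}(v), and satisfies $D\cdot C_i=0$ for every $i$ (as computed in Remark \ref{nef}).

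For the implication ``$\Leftarrow$'', suppose $X$ carries an elliptic fibration $f\colon X\to\pp^1$ whose fibres contain the curves $C_i$. Then the $C_i$ lie in finitely many fibres, so a general fibre $E$ is a smooth elliptic curve disjoint from the branch divisor $C$. Hence $\pi\inv(E)\to E$ is an \'etale double cover, and its connected components have genus $1$. Taking the Stein factorization of $f\circ\pi\colon S\to\pp^1$ therefore exhibits a genus-one fibration on $S$, which forces $\kappa(S)\le 1$.

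For ``$\Rightarrow$'' I pass to $S$, where $K_S=\pi^*D$ is nef with $K_S^2=0$, $\mathcal{X}(\OO_S)=2$ and $\kappa(S)\in\{0,1\}$. Denote by $\iota$ the covering involution and by $R_i:=\pi\inv(C_i)$ the $(-2)$-curves in its fixed locus; since $K_S\cdot R_i=D\cdot C_i=0$ while $R_i^2=-2<0$, each $R_i$ is contracted by every pluricanonical map of $S$. If $\kappa(S)=1$, then $S$ is a minimal properly elliptic surface, its elliptic fibration $\psi\colon S\to B$ coincides with the map defined by a suitable pluricanonical system and is therefore $\iota$-invariant, and the $R_i$ are components of its fibres. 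Consequently $\psi$ descends to a fibration $\bar\psi\colon X\to\bar B$; a general fibre of $\psi$ is disjoint from the fixed locus $\sum R_i$, so $\iota$ either identifies it with a distinct fibre or acts freely on it, and in both cases the general fibre of $\bar\psi$ is again an elliptic curve. Since $\bar\psi(C_i)=\bar\psi(\pi(R_i))$ is a point, the $C_i$ lie in fibres, giving the desired elliptic fibration on $X$.

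The main obstacle is the remaining case $\kappa(S)=0$, where $S$ is a $K3$ surface with $K_S=\OO_S$ by Lemma \ref{lem-2KX}, so that $D=K_X+L=0$ and $C=-2K_X$. Here there is no canonical fibration to descend and no pluricanonical pencil on $X$ to exploit, so the fibration must be produced by hand. The approach I would take is to find an $\iota$-invariant isotropic nef class on $S$ orthogonal to the $R_i$ --- equivalently, an isotropic class $F$ on $X$ with $F^2=0$, $F\cdot K_X=0$ and $F\cdot C_i=0$ spanning a pencil --- using the hyperbolic signature of the invariant part of $\NS(S)$ (that is, of $\NS(X)$) together with the configuration of the fixed $(-2)$-curves of the anti-symplectic involution $\iota$. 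Such a class yields an $\iota$-invariant elliptic fibration on $S$ which descends to $X$ exactly as above; verifying its existence and that it gives a genuine pencil containing the $C_i$ is the crux of the argument.
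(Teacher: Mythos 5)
Your ``if'' direction and your treatment of the case $\kappa(S)=1$ are correct and essentially the paper's own argument: the paper's Lemma \ref{elliptic} is exactly your pullback/Stein-factorization step, and for $\kappa(S)=1$ the paper likewise observes that the pluricanonical divisors of $S$ are supported on fibres, so the elliptic pencil is unique, hence invariant under the covering involution, and descends to a fibration of $X$ whose fibres contain the $C_i$ because $K_S\cdot\pi^*C_i=0$.

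The genuine gap is the case $\kappa(S)=0$, which you yourself flag as ``the crux'' but do not prove. Here $S$ is a K3 surface, $K_X+L\equiv 0$ and $C\equiv -2K_X$, and your plan is to produce an $\iota$-invariant nef isotropic class orthogonal to the $R_i=\pi^{-1}(C_i)$, equivalently a class $F\in\NS(X)$ with $F^2=FK_X=FC_i=0$ ``spanning a pencil''. Two concrete obstacles remain. First, nefness: an isotropic class orthogonal to the $C_i$ exists (the orthogonal complement of the $C_i$ in $\NS(X)$ has signature $(1,9)$, so Meyer's theorem applies), but on a rational surface the obstruction to nefness comes from irreducible curves of arbitrary negative self-intersection, so the Weyl-group reflection trick is unavailable; and on the K3 side, reflections in $(-2)$-curves need not preserve $\iota$-invariance or orthogonality to the $R_i$. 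Second, and more seriously, on a rational surface a nef isotropic class $F$ with $FK_X=0$ need \emph{not} move in a pencil: Riemann--Roch only gives $h^0(F)\geq 1$, and in general only a multiple of $F$ defines the elliptic fibration --- this is precisely the phenomenon of Halphen pencils of index $m>1$, which actually occurs in the situation at hand (for $n=1$ the fibration containing $C$ has the multiple fibre $C+2E$ of multiplicity $2$). Proving that a suitable multiple yields a connected genus-one pencil containing the $C_i$ is the real content, and it is exactly what the paper imports from the classification of Coble surfaces: since $h^0(-K_X)=0$ and $h^0(-2K_X)=1$, $X$ is a Coble surface, and by \cite{dolga-zhang} (see Lemma \ref{rem_coble_sextic} and Remark \ref{halphen}, via \cite{cossec-dolga}) the image of $C$ in $\mathbb{P}^2$ is a Coble sextic belonging to a Halphen pencil of index $2$ (if $n=1$) or $1$ (if $n>1$), whose proper transform on $X$ is the required elliptic fibration. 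Without this input, or a completed version of your lattice/linear-system argument, the ``only if'' direction is not established.
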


Before proving the above result, we will need various facts that we now establish.

Suppose that $X$ has an elliptic fibration
$$p': X \to \pp^1$$
with general fibre $F'$ such that every  $(-4)$-curve is contained in a fibre.

\

Since  $F'C_i=0$ and $F'C=0$,   $\mathcal{O}_{F'}(C)=\mathcal{O}_{F'}$ and so either
$\mathcal{O}_{F'}(L)=\mathcal{O}_{F'}$
 or
$\mathcal{O}_{F'}(L)\ne \mathcal{O}_{F'}$.

In the first case for a general fibre $F'$, $\pi^*(F')$ is disconnected. More precisely  $\pi^*(F')$  is the union of two fibres of an elliptic fibration on $S$. In the second case $\pi^*(F')$  is connected and, by the Hurwitz formula, again  elliptic.

   So we have
\begin{lem}\label{elliptic}
With the above notation, $\pi^*F'$ gives an elliptic fibration on $S$ and
we have the following
    commutative diagram:
$$
\begin{CD}\ S@>p>>B\\ @V\pi VV  @VV \pi' V\\ X@>p' >>\mathbb{P}^1
\end{CD}
$$
\end{lem}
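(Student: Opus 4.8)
The plan is to analyze the two cases separated by whether $\OO_{F'}(L)$ is trivial on a general fibre $F'$ of the elliptic fibration $p'$, and in each case produce the elliptic fibration on $S$ together with the factoring map $p$. First I would note that since every $(-4)$-curve $C_i$ lies in a fibre of $p'$, we have $F'C_i=0$ for each $i$, hence $F'C=0$ and $F'L=0$ (as $C\equiv 2L$). Thus the restriction $\OO_{F'}(L)$ is a degree-zero line bundle on the general elliptic fibre $F'$, so it is either trivial or a nontrivial element of $\Pic^0(F')$; these are exactly the two cases in the statement preceding the lemma.

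The key technical input is the standard description of the double cover restricted to a fibre. Writing $C=2L$ and restricting the branch data to $F'$, the cover $\pi^{-1}(F')\to F'$ is the double cover determined by the line bundle $\OO_{F'}(L)$ together with the section cutting out $C\cap F'$; since $F'$ is disjoint from the branch curve $C$ for general $F'$, this is an \emph{\'etale} double cover of the smooth elliptic curve $F'$. In the first case, $\OO_{F'}(L)=\OO_{F'}$, so the cover is the trivial \'etale double cover: $\pi^{-1}(F')$ is disconnected, namely two disjoint copies of $F'$, and each is again a smooth elliptic curve. In the second case $\OO_{F'}(L)$ is a nontrivial 2-torsion-type class giving a connected \'etale double cover $\pi^{-1}(F')\to F'$, which by the Hurwitz formula has genus $1$ (the ramification is empty, so $2g(\pi^{-1}(F'))-2 = 2(2g(F')-2)=0$), hence is again elliptic. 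In both cases the connected components of $\pi^{-1}(F')$ are smooth elliptic curves.

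I would then assemble these fibrewise descriptions into a global fibration on $S$. The composite $p'\circ\pi: S\to\pp^1$ has general fibre $\pi^{-1}(F')$, whose connected components are elliptic. Applying Stein factorization to $p'\circ\pi$ yields a curve $B$ and a morphism $p:S\to B$ with connected fibres, together with a finite map $\pi':B\to\pp^1$ fitting into the asserted commutative diagram $p'\circ\pi=\pi'\circ p$. In the connected case $\pi'$ is an isomorphism (so $B=\pp^1$) and $p$ is the elliptic fibration with fibre $\pi^{-1}(F')$; in the disconnected case $\pi'$ is the degree-$2$ map corresponding to the two components, and $p$ is still an elliptic fibration. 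Since the general fibre of $p$ is a smooth elliptic curve, $p$ is genuinely an elliptic fibration on $S$, as claimed.

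The main obstacle I anticipate is the careful verification that the fibrewise double-cover computation is valid for a \emph{general} fibre and that the base curve and map emerge cleanly from Stein factorization — in particular, checking that $\pi^{-1}(F')$ is smooth and that its components are honest elliptic curves rather than degenerate configurations, which relies on $F'$ being a general (hence smooth) fibre disjoint from the branch locus $C$. The Hurwitz/\'etale argument in the connected case is routine once one knows the cover is unramified over the general fibre, so the genuine care is in setting up the Stein factorization and identifying $B$ and $\pi'$ correctly in each of the two cases.
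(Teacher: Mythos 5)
Your proposal is correct and takes essentially the same route as the paper: the dichotomy on whether $\OO_{F'}(L)$ is trivial, the observation that the cover is \'etale over a general fibre (disconnected with two elliptic components in the trivial case, connected and elliptic by Hurwitz in the nontrivial case). The Stein factorization of $p'\circ\pi$ that you add at the end simply makes explicit the construction of $B$, $p$ and $\pi'$, which the paper asserts without formal justification.
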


\

Moreover,
\begin{lem}
In the above situation one of the following holds
\begin{itemize}
    \item $h^0(-2K_X)=1$, $\kappa(S)=0$ and $S$ is a K3 surface
    \item $h^0(-2K_X)=0$, $\kappa(S)=1$ and $S$ is an elliptic surface.
\end{itemize}
\end{lem}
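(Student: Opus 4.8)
The plan is to combine the elliptic fibration just produced on $S$ with the classification already obtained in Proposition \ref{classification} and Lemma \ref{lem-2KX}; the whole argument is a matter of ruling out the general type case and then reading off the two alternatives. First I would record what is already known: since $\chi(\OO_S)=2$, no surface of Kodaira dimension $-\infty$ can occur (rational and ruled surfaces have $\chi\leq 1$), so $\kappa(S)\geq 0$. Also, by Remark \ref{nef} we may assume $K_X+L$ is nef, and since $K_S=\pi^*(K_X+L)$ this makes $K_S$ nef, so $S$ is minimal.

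The decisive input is that by Lemma \ref{elliptic} the surface $S$ carries an elliptic fibration $p\colon S\to B$, whose general fibre $\Phi$ is a smooth curve of genus $1$ (in the connected case $\Phi=\pi^*F'$ is elliptic by Hurwitz, in the disconnected case $\Phi$ is a copy of $F'$). I claim this forces $\kappa(S)\leq 1$. Indeed, if $S$ were of general type, then, being minimal, $K_S$ would be nef and big, so $K_S^2>0$; but for the moving fibre one has $\Phi^2=0$, $\Phi\not\equiv 0$ and $K_S\cdot\Phi=2g(\Phi)-2=0$, which contradicts the Hodge index theorem (on the orthogonal complement of $K_S$ the intersection form is negative definite, so $\Phi\perp K_S$ with $\Phi\not\equiv 0$ would give $\Phi^2<0$). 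Hence $\kappa(S)\in\{0,1\}$ and in particular $S$ is not of general type.

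With this the statement follows from the earlier results. Since $S$ is not of general type, the contrapositive of the second case of Proposition \ref{classification} forces $K_X^2=-n$, placing us in its case (1): if $\kappa(S)=0$ then $S$ is a K3 surface, and if $\kappa(S)=1$ then $S$ is an elliptic surface. To attach the values of $h^0(-2K_X)$ I would then invoke Lemma \ref{lem-2KX}, which gives $h^0(X,-2K_X)\leq 1$ with $h^0(X,-2K_X)\neq 0$ if and only if $S$ is a K3 surface. Thus in the K3 case $h^0(-2K_X)=1$, while in the properly elliptic case $h^0(-2K_X)=0$, which is exactly the claimed dichotomy.

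I do not expect a genuine obstacle, as every ingredient is already in place; the only point needing a little care is the justification that the elliptic fibration excludes the general type case, which is precisely where the Hodge-index argument enters. The remainder is bookkeeping linking Lemma \ref{elliptic}, Proposition \ref{classification} and Lemma \ref{lem-2KX}.
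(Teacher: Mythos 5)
Your proof is correct and takes essentially the same route as the paper: the paper's own proof is a one-line appeal to Proposition~\ref{classification} and Lemmas~\ref{lem-2KX} and \ref{elliptic}, which is exactly the chain you assemble (elliptic fibration on $S$ rules out general type, $\chi(\OO_S)=2$ rules out $\kappa=-\infty$, classification forces $K_X^2=-n$ and the K3/elliptic dichotomy, and Lemma~\ref{lem-2KX} pins down $h^0(-2K_X)$). The only difference is that you spell out, via the Hodge index theorem, the standard fact that a genus-one fibration excludes general type, a step the paper leaves implicit.
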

\begin{proof}
The proof follows by Proposition \ref{classification} and Lemmas \ref{lem-2KX} and \ref{elliptic}.
\end{proof}

Having established the properties above we now examine the converse situation. Let $X$ be as in Assumption \ref{hyp} and $S$ the double cover of $X$ branched in $C$.

If $\kappa(S)=0$, then $S$ is a K3 surface, so $K_X+L\equiv 0$ and
 $X$ is a Coble surface, this is, a nonsingular rational surface with empty anticanonical linear system $|-K_X|$ but nonempty bi-anticanonical system $|-2K_X|$. By the results in \cite{dolga-zhang}, in this case, there is a birational
morphism $X \to \mathbb{P}^2$ such that the image of  $C \in
|-2K_X|$ in $\mathbb{P}^2$ is a member of $|-2K_{\mathbb{P}^2}|$, whence a plane
sextic, that will be called a Coble sextic.

\begin{lem}\label{rem_coble_sextic}
 The classification in (\cite{dolga-zhang}, Section 5) yields:
\begin{itemize}
    \item [(i)] If $n=1$, the image of the irreducible $(-4)$-curve $C$ on
    $\mathbb{P}^2$ is an irreducible member of a pencil of sextics with nine distinct double base points and having an extra singular double
    point.
    \item [(ii)] If $n>1$,
    then the image of $C$ is the union of two members
    (both singular) of a pencil of cubics.
    \end{itemize}
\end{lem}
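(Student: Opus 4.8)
The plan is to determine the shape of the Coble sextic $B:=f(C)\subset\mathbb{P}^2$ from the classification in (\cite{dolga-zhang}, Section 5), using the number $n$ of curves in the even set as the invariant that selects the relevant case. Recall that $\kappa(S)=0$ forces $S$ to be K3, so $K_X+L\equiv 0$, whence $L\equiv -K_X$ and $C\equiv -2K_X$; the birational morphism $f\colon X\to\mathbb{P}^2$ recalled above carries $C$ to a sextic $B\in|-2K_{\mathbb{P}^2}|$ whose strict transform is $C$. Since $K_X^2=L^2=-n$, the morphism $f$ is the blow-up of the $9+n$ singular points of $B$, and $C=f^{*}B-\sum_j m_j E_j$, where $m_j=\mathrm{mult}_{p_j}B$ and the $E_j$ are the exceptional divisors. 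The natural first move is to split according to whether $C$, equivalently $B$, is irreducible; since $C=C_1+\cdots+C_n$ is a disjoint union of smooth rational curves, this happens exactly when $n=1$.

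For $n=1$ I would show that $B$ is a ten-nodal sextic. From $C^2=(-2K_X)^2=4K_X^2=-4$ and $B^2=36$ one obtains $\sum_j m_j^2=40$, while the rationality of $C\cong\mathbb{P}^1$ gives $\sum_j\binom{m_j}{2}=\binom{5}{2}=10$ by the genus formula; together these force exactly ten double points, matching the ten blown-up points. Comparing with (\cite{dolga-zhang}, Section 5) then identifies $B$ as an irreducible member of the pencil of sextics with nine distinct double base points, the tenth node playing the role of the extra singular double point. This is assertion (i).

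For $n>1$ the curve $B$ is reducible, and here I would appeal to the list of reducible Coble sextics in (\cite{dolga-zhang}, Section 5), according to which such a sextic is a union of two members of a pencil of cubics. Both members pass through the nine base points of the pencil, and the requirement that the even set consist of $(-4)$-curves forces each member to be singular: for an irreducible nodal cubic $D$, say, with $D^2=9$, its strict transform has self-intersection $9-\sum_j m_j^2$, so reaching $-4$ needs $\sum_j m_j^2=13=9\cdot 1+1\cdot 4$, namely the nine base points together with one node. A short check then shows that the strict transforms of the components are mutually disjoint $(-4)$-curves whose sum is $-2K_X=2L$, recovering the even set; this is assertion (ii).

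The step I expect to be the main obstacle is the uniform matching of our hypotheses to the Dolgachev--Zhang list: one must confirm that the conditions ``smooth, disjoint, self-intersection $-4$, divisible by $2$ in $\Pic(X)$'' isolate exactly these two families among all Coble sextics, and in particular exclude reducible sextics that split otherwise than as two cubics of a pencil (for instance as a conic plus a quartic). The self-intersection and multiplicity bookkeeping used above is the tool for this exclusion, but verifying it across all the reducible types occurring in the classification is the delicate point.
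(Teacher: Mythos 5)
Your overall skeleton---reduce to the Coble-surface setup and read the answer off the classification in (\cite{dolga-zhang}, Section 5)---is exactly what the paper does: the paper in fact offers no argument beyond that citation. Your bookkeeping for $n=1$ is correct and a genuine supplement: from $\sum_j m_j^2=40$ and $\sum_j\binom{m_j}{2}=10$ distributed over the (at most) ten blown-up points, Cauchy--Schwarz forces all $m_j=2$, i.e.\ a ten-nodal sextic.

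However, the step you yourself flag as the main obstacle is a genuine gap, and the tool you propose for it cannot close it. You plan to \emph{exclude} reducible splittings other than ``two cubics of a pencil'' (for instance conic plus quartic) by self-intersection and multiplicity bookkeeping; but such splittings cannot be excluded, because they occur. Take a generic irreducible quartic $R$ with three non-collinear nodes and a smooth conic $Q$ meeting $R$ transversally in $8$ points, and let $f\colon X\to\mathbb{P}^2$ blow up the $8$ intersection points and the $3$ nodes. Then $K_X^2=-2$, the strict transforms $\tilde{Q},\tilde{R}$ are disjoint smooth rational curves with $\tilde{Q}^2=4-8=-4$ and $\tilde{R}^2=16-8-12=-4$, and $\tilde{Q}+\tilde{R}=f^*(6H)-2\sum_{j=1}^{11}E_j\equiv -2K_X$, so this is an even set with $n=2$ whose double cover is a K3; moreover $|-K_X|=\emptyset$, since a cubic through the eleven points would contain $Q$ by B\'ezout and the residual line would have to pass through the three non-collinear nodes. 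Thus $f$ exhibits the image of $C$ as a conic plus a quartic, which satisfies all your numerical constraints. The point is that the lemma is not a statement about \emph{every} birational morphism $X\to\mathbb{P}^2$ carrying $C$ into $|-2K_{\mathbb{P}^2}|$; it is an \emph{existence} statement about a suitable such morphism, and different plane models are related by Cremona transformations (in the example, the quadratic transformation centered at two nodes of $R$ and one point of $Q\cap R$ turns both components into nodal cubics meeting in nine points). That existence is precisely what the Dolgachev--Zhang classification supplies, via the Halphen pencil of index $1$ or $2$ attached to the elliptic fibration of $X$; so the correct route for (ii) is to invoke (or reconstruct) that good model, not to rule out the other splittings, which is impossible.
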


\

We are ready to prove Proposition \ref{thm_elliptic}\\
{\it Proof of Proposition \ref{thm_elliptic}.}\
If $X$ has an elliptic fibration containing the $(-4)$-curves the result follows by Lemma \ref{elliptic}.

Conversely, assume that $\kappa(S)\leq 1$.
By Proposition \ref{classification}, we have two possibilities: $\kappa(S)=0$ or $1$.

For $\kappa(S)=0$, $S$ is a K3 surface and applying Lemma \ref{rem_coble_sextic} we obtain the result. In fact, if $n=1$, then $X$ is
a rational elliptic surface with one multiple fibre
of multiplicity $2$ whilst, if $n> 1$, $X$ is a rational elliptic surface
without multiple fibres.

For $\kappa(S)=1$,
 $S$ is an elliptic surface and so there is a smooth curve $B$ and a
surjective morphism $p:S\to B$ whose generic fibre is a nonsingular elliptic curve $F$.
It is well known that: $\mathcal{X}(\mathcal{O}_S)=2$ implies
\begin{equation}\label{eq-elliptic}
K_S \sim 2g(B)F + \sum_{i=1}^{r} \frac{m_i-1}{m_i}F,
\end{equation}
where $m_1F_1,...,m_rF_r$ are the multiple fibres of $p:S\to B$.\\
Note that $K_S\pi^*C_i=2(K_X+L)C_i=0$. Since
the elliptic fibration of $S$ must be invariant under the involution associated to the double cover $\pi: S \to X$, it induces a fibration of $X$, $p':X \to \mathbb{P}^1$, whose general fibre we denote by $F'$.
 Since $F\pi^*C_i=0$, also $F'C_i=0$ and $F'C=0$. Using the same reasoning as in Lemma \ref{elliptic}, we see that also $p':X \to \mathbb{P}^1$ is an elliptic fibration and we are done.
\Qed

\begin{remark}\label{halphen}
{\rm Recall that a \textit{Halphen pencil of index $m$} is an irreducible pencil of plane
 curves of degree $3m$ with 9  base points  of multiplicity $m$ (some of them may
 be infinitely near). By \cite{cossec-dolga}, the minimal resolution of a Halphen pencil of index $m$
 is a rational elliptic surface with a multiple fibre of multiplicity
 $m$.

 Conversely, again by   Theorem 5.6.1 in \cite{cossec-dolga}, if $f:X'\to \mathbb{P}^1$ is a rational elliptic surface with a multiple fibre of multiplicity $m$ ($m=1$ if
 it does not have multiple fibre), then
 there exists a birational morphism $\tau: X' \to \mathbb{P}^2$ such
 that the composition of rational maps $f\circ
 \tau^{-1}:\mathbb{P}^2 \dashrightarrow \mathbb{P}^1$ is given by a Halphen
 pencil of index $m$.

So, if $X$ is a surface as in Assumption \ref{hyp}, then $\kappa(S)\leq 1$ if and only if  there is a Halphen pencil of index $m$ in $\pp^2$ corresponding to the elliptic fibration. In particular for $\kappa(S)=0$, $m=1$ or $m=2$.}
 \end{remark}

 \

\section{The $K3$ case}


\

In this section we assume that $K_S \equiv \mathcal{O}_S$, thus
 one has $-K_X\equiv L$ and $-2K_X\equiv C$.

Notice that $S$
is a smooth K3 surface with an involution $\sigma$ such that
$\sigma^*\omega=-\omega$ for a nonzero holomorphic 2-form. Zhang in \cite{zhang} classified the quotients of $K3$ surfaces modulo involutions. In particular, with the approach top-down, he proved the following

 \begin{prop}[\cite{zhang}] \label{teorema}
Assuming that $X$ is a rational surface with an even set of $n$ disjoint $(-4)$-curves. If the double cover $S$
 is a $K3$ surface, then we can conclude that
$$1\leq n\leq10,$$ and $n$ can take any value in this range (see examples below).
\end{prop}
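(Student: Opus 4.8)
The plan is to recognize the pair $(S,\sigma)$ as a non-symplectic involution on a K3 surface and read off the bound from the standard Lefschetz/lattice numerology, leaving the sharpness (that every value is attained) to the explicit constructions. As already recorded in the excerpt, when the double cover is a K3 surface we have $K_S\equiv\OO_S$, and $S$ carries the covering involution $\sigma$ with $\sigma^*\omega=-\omega$ and $X=S/\sigma$. So the whole question is a statement about anti-symplectic involutions, which is exactly Zhang's setting.

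First I would pin down the fixed locus of $\sigma$. The ramification divisor of $\pi\colon S\to X$ coincides with $\mathrm{Fix}(\sigma)$ and maps isomorphically onto the branch divisor $C=C_1+\cdots+C_n$. Since each $\pi^{-1}(C_i)$ is a $(-2)$-curve, hence a smooth rational curve isomorphic to $C_i\cong\pp^1$, and the $C_i$ are pairwise disjoint, $\mathrm{Fix}(\sigma)$ is a disjoint union of exactly $n$ smooth rational curves; in particular every fixed curve has genus $0$. Next comes the numerical heart of the argument, the topological Lefschetz fixed point formula. Let $r$ be the rank of the invariant sublattice $H^2(S,\Z)^{\sigma^*}$. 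Because $\sigma$ is anti-symplectic, the span of the real and imaginary parts of $\omega$ is a positive-definite $2$-plane on which $\sigma^*$ acts as $-1$; hence the invariant lattice carries exactly one positive direction and has signature $(1,r-1)$, while its orthogonal complement contains the period and has signature $(2,20-r)$. This already forces $1\le r\le 20$. Using $H^1(S)=H^3(S)=0$ and the fact that $\sigma^*$ acts as $+1$ on a subspace of dimension $r$ and as $-1$ on a subspace of dimension $22-r$ of $H^2(S,\R)$, the Lefschetz number is
$$L(\sigma)=1+(2r-22)+1=2r-20.$$
On the other hand $L(\sigma)=\chi_{\mathrm{top}}(\mathrm{Fix}(\sigma))=2n$, since $\mathrm{Fix}(\sigma)$ is $n$ disjoint copies of $\pp^1$. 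Therefore $n=r-10$, and the constraint $r\le20$ together with $n\ge1$ yields $1\le n\le10$.

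Finally, to see that each value $1\le n\le 10$ is realized one must exhibit, for every such $n$, a K3 surface with an anti-symplectic involution whose fixed locus is $n$ disjoint rational curves; in lattice terms this is the realizability of the $2$-elementary data with $r=n+10$ (equivalently $g=0$ in Nikulin's genus formula), which is precisely what Zhang's classification provides and what the examples below make explicit. The main obstacle is not the inequality itself — once the fixed locus is identified, Lefschetz delivers it almost for free — but the two supporting structural facts: checking that the entire fixed locus consists of rational curves, so that $\chi_{\mathrm{top}}(\mathrm{Fix}(\sigma))=2n$, and extracting the bound $r\le20$ from the existence of the period via the signature of the anti-invariant lattice. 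The sharpness half genuinely requires the geometric constructions rather than the numerology.
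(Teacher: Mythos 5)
Your argument is correct, but it is not the paper's argument: the paper gives no proof of this proposition at all. It states it as a quotation from Zhang's top-down classification of quotients of $K3$ surfaces by involutions \cite{zhang}, with the examples that follow supplying the sharpness (every $1\le n\le 10$ occurs), and with a remark that one could alternatively argue as in the later elliptic-fibration sections. What you have done instead is extract, in self-contained form, the numerology underlying that classification: identify $\mathrm{Fix}(\sigma)$ with the ramification divisor, hence with $n$ disjoint smooth rational curves; compute the topological Lefschetz number $L(\sigma)=1+\mathrm{tr}\bigl(\sigma^*|_{H^2(S,\R)}\bigr)+1=2r-20$, where $r$ is the rank of the invariant part; equate it with $\chi_{\mathrm{top}}(\mathrm{Fix}(\sigma))=2n$ to get $n=r-10$; and bound $r\le 20$ because the anti-invariant part contains the positive-definite $2$-plane spanned by $\mathrm{Re}\,\omega$ and $\mathrm{Im}\,\omega$. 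All the ingredients are standard and correctly applied; note that for the bound you only need that the anti-invariant part has dimension at least $2$, so the precise signatures $(1,r-1)$ and $(2,20-r)$ --- the first of which additionally uses that the pullback of an ample class from the projective quotient $X$ is an invariant class of positive square --- are a refinement rather than a necessity. The trade-off between the two routes: yours proves the inequality $1\le n\le 10$ directly and more elementarily than invoking the full classification, whereas the paper's citation buys the complete list of possible fixed-locus configurations (in Nikulin's terms, your case is the $2$-elementary data with $g=0$, $r=n+10$); in both treatments the statement that every value of $n$ is attained genuinely rests on the explicit geometric examples, not on the numerology.
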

 \

\begin{remark}
It is possible to give another proof of the previous proposition using similar arguments to those used in the following sections.
\end{remark}

\begin{example}
Let us give an example of each possible case:
\begin{enumerate}
\item For $n=1$, as we have seen above, take a sextic curve  in $\mathbb{P}^2$
with ten double
points. After blowing them up, we obtain a rational surface with a
$(-4)$-curve. See also (\cite{zhang}, Example 2.7).
\item For $n=2$, in a pencil of cubics of $\mathbb{P}^2$, take  two
singular members with one node each, $N_1$ and $N_2$ respectively. Blowing up the base
points, $N_1$ and $N_2$ we obtain the two $(-4)$-curves. This example,
with another point of view, can be found in \cite{lee-park}.
\item For $n=3$, in a pencil of cubics of $\mathbb{P}^2$ take  two
singular members: one cubic with a node and a conic plus a
line. Blowing up the base
points and the singular points of these two singular members we obtain
the result.
\item For $n=4$, take a pencil of cubics and choose two singular cubics:
three nonconcurrent
 lines $L_1,L_2,L_3$ and one
 cubic, $C_1$, with one
unique singular point, a
double point $N$.
Let $$C=L_1+L_2+L_3+C_1$$
 with the following singular points:

$L_i\cap L_j=P_{ij}$ with $i,j=1,2,3$, and
$L_1\cap C_1=\{Q_1,Q_2,Q_3\}$, $L_2\cap C_1=\{R_1,R_2,R_3\}$, $L_3\cap C_1=\{S_1,S_2,S_3\}$ and $N$.

Blowing up the thirteen singular points of $C$ we obtain $p: X \to \mathbb{P}^2$ with
$\tilde{L}_i^2=-4$, $\tilde{C}_1^2=-4$ and rational.
\item For $n=5$, take a pencil of cubics and $C$ is the sum of two singular members one conic plus a line,
 and three lines.
\item For $n=6$, Let $L_1,...,L_6$ be six nonconcurrent lines in $\mathbb{P}^2$
and $C=L_1 +...+L_6\sim 6H$. Then $C$ has 15 singular points, we blow up
each line in 5 different points $p:X \to \mathbb{P}^2$,
then $\widetilde{L_i}^2=-4$, this is, take a pencil of cubics with two singular cubics $L_1+L_2+L_3$ and
$L_4+L_5+L_6$.

\item For $n=7$, in $\mathbb{F}_0$ denote by $L_1$ and $L_2$ the two
rulings, then take seven effective divisors $R_1,R_2, R_3, M_1, M_2, M_3$ and $S$ where $R_i\sim L_1$, $M_i\sim L_2$ with $i=1,2,3$,
and finally $S\sim L_1+ L_2$ without singular points. Blowing up the
intersection points we obtain seven $(-4)$-curves and the double cover is a $K3$ surface.
This is, in $\mathbb{P}^2$ take a pencil of cubics and the two singular cubics
are one nonsingular conic $C$ plus a line $L_1$ and three nonconcurrent
lines $L_2,L_3, L_4$ with these intersection points $C\cap L_i=P_{i1}$ and $P_{i2}$
such that $L_1\cap L_2=P_{11}=P_{21}$ and $L_3\cap L_4=P_{31}=P_{41}$, the
other intersection points are all different.

\item For $n=8$, in $\mathbb{F}_0$, with the same notation as above,
take $R_1\sim R_2\sim R_3\sim R_4 \sim L_1$ and
$M_1\sim M_2\sim M_3\sim M_4 \sim L_2$. As before, in $\mathbb{P}^2$
we take a pencil of cubics with these two singular members: $L_1 + L_2 +
L_3$, three concurrent lines, write $L_1\cap L_2\cap L_3=P$ and $L_3 + L_4 + L_5$ three concurrent
lines as well, write $L_3\cap L_4\cap L_5=Q$, such that $P \neq Q$ and the
other intersection points are all different.
\item For $n=9,10$, in $\mathbb{P}^2$ take six lines $L_1,...,L_6$
with this configuration: $L_1,L_2$ and $L_3$ with a common point $P$;
$L_1,L_4$ and $L_5$ with a common point $Q_1$; $L_2,L_4$ and $L_6$ with a common point
$Q_2$; $L_3,L_5$ and $L_6$ with a common point $Q_3$; finally $L_1\cap L_6=R_1$, $L_2\cap L_5=R_2$
and $L_3\cap L_4=R_3$, all different points. First of all, we
blow up $P$, then  $\tilde{L}_1, \tilde{L}_2$,
$\tilde{L}_3$ are fibres of $\mathbb{F}_1$ and denote by $L$ the exceptional curve lying over the point $P$;
now we blow up each fibre six times and we
obtain ten $(-4)$-curves, two of them in each fibre, plus the strict transform of
$\tilde{L}_1, \tilde{L}_2, \tilde{L}_3$ and $L$.

 We can obtain $n=9$  in a similar way, see for instance Example 2.10 of
 \cite{dolga-zhang}.

\end{enumerate}
\end{example}

\


\section{The elliptic case}

\

Now we examine more closely the case when
$S$ is an elliptic surface with $\kappa(S) = 1$.
Let $F$ be a general fibre of the elliptic fibration
 $p:S\to B$.

Since the Kodaira dimension of $S$ is 1 there are  effective nonzero n-canonical
divisors. These are supported on the fibres of the elliptic pencil (see equation \ref{eq-elliptic}) and so
the elliptic pencil is unique. Thus it is necessarily invariant under the involution associated to the double cover and we have a commutative diagram:\begin{equation}\label{eq_elliptic}
\begin{CD}\ S@>p>>B\\ @V\pi VV  @VV \pi' V\\ X@>p' >>\mathbb{P}^1
\end{CD}
\end{equation}
\medskip
where $p': X \to \pp^1$ is also an elliptic fibration.

We want to prove:

\begin{thm}
In the above situation we have:
\begin{itemize}
\item[(i)] $1 \leq n \leq 12$;
\item[(ii)] if $F'$ is the general fibre of $p'$, then  $\pi^*(F')=F_1 + F_2$ is disconnected.
\end{itemize}
\end{thm}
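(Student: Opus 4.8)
The plan is to prove both statements simultaneously by exploiting the structure of the elliptic fibration $p:S\to B$ and the numerical invariants already assembled. Since $\kappa(S)=1$ and $\mathcal X(\OO_S)=2$, formula \eqref{eq-elliptic} gives $K_S\sim 2g(B)F+\sum_i\frac{m_i-1}{m_i}F$, and because $S$ is a \emph{nontrivial} relatively minimal elliptic surface with $\mathcal X(\OO_S)=2$, one computes $c_2(S)=12\mathcal X(\OO_S)=24$, which is the total Euler number contributed by the singular fibres. The key observation is that each curve $\tilde C_i:=\pi^{-1}(C_i)$ is a $(-2)$-curve contained in a fibre of $p$ (since $F\pi^*C_i=F'C_i=0$), so the $n$ disjoint $(-2)$-curves $\tilde C_1,\dots,\tilde C_n$ sit inside singular fibres and each forces a local contribution to $c_2(S)$.

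First I would establish part (ii), since (i) depends on knowing which of the two cases of Lemma \ref{elliptic} we are in. The dichotomy is whether $\OO_{F'}(L)=\OO_{F'}$ (disconnected pullback, two fibres) or $\OO_{F'}(L)\ne\OO_{F'}$ (connected pullback, $\pi^*F'$ is a single fibre). I would rule out the connected case as follows: if $\pi^*F'$ were connected, then $p:S\to B$ would have $B=\pp^1$ with the involution acting trivially on the base (degree-one map $B\to\pp^1$), and the branch locus $\sum\tilde C_i$ of $\pi$ would have to be disjoint from a general fibre but meet the fibration in a way compatible with $\sigma$. The cleaner route is to show that in the $\kappa(S)=1$ case the existence of the even set $C=2L$ with $L=-K_X$ failing (here $h^0(-2K_X)=0$) forces $\OO_{F'}(L)$ to be the nontrivial $2$-torsion point determined by the half-canonical structure — but since \eqref{eq-elliptic} shows $K_S=\pi^*(K_X+L)$ is a positive rational multiple of $F$ and $\pi$ is étale away from the $\tilde C_i$, restricting $K_S=\pi^*(K_X+L)$ to a fibre and using $\chi(\OO_S)=2$ against $\chi(\OO_X)=1$ pins down $\pi^*F'=F_1+F_2$. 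Concretely, $\chi(\OO_S)=2\chi(\OO_X)+\tfrac12 L(L+K_X)=2+0=2$, and the doubling of $\chi$ is exactly what a disconnected (degree-two étale over the general fibre) pullback produces, whereas a connected elliptic pullback via Hurwitz would be ramified and give a different count. I expect this comparison of Euler characteristics, together with $B\to\pp^1$ being degree two (forced by $g(B)=0$ from $\chi(\OO_S)=2$ and the canonical bundle formula), to yield that $\pi':B\to\pp^1$ is the quotient and $\pi^*F'$ splits.

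With (ii) in hand, $\pi^*F'=F_1+F_2$ means the involution $\sigma$ swaps the two fibres $F_1,F_2$ over a general point, so $\sigma$ acts nontrivially on $B$ and $B\to\pp^1$ is a genuine double cover. For the bound (i), I would count Euler numbers. Each $(-4)$-curve $C_i$ on $X$ lies in a singular fibre $\Phi_i'$ of $p'$, and $\tilde C_i=\pi^{-1}(C_i)$ is a $(-2)$-curve in the corresponding fibre of $p$. Disjoint $(-2)$-curves in fibres of an elliptic surface each contribute at least a fixed positive amount to $c_2$; more precisely, using the earlier Miyaoka-type input (Proposition \ref{Miyaoka}) with $K_S$ nef gives $3c_2(S)-K_S^2\ge\frac92 n$, and here $K_S^2=2(K_X^2+n)=0$ since we are in the case $K_X^2=-n$. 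Thus $3\cdot 24\ge \tfrac92 n$, i.e. $n\le 16$ — but this only recovers the weaker bound. To sharpen to $n\le 12$, I would instead count directly: the fibres $F_1,F_2$ over a general point are exchanged, so the singular fibres of $p$ come in $\sigma$-conjugate pairs except those lying over the branch points of $B\to\pp^1$, and each $\tilde C_i$ forces its fibre to be reducible. The total $c_2(S)=24$ is distributed so that each pair of branch-paired reducible fibres carrying the $\tilde C_i$ consumes at least $2$ from the Euler number, giving $2n\le 24$, hence $n\le 12$.

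The main obstacle I anticipate is making the Euler-number bookkeeping for (i) rigorous enough to land exactly on $12$ rather than the crude $16$. The subtlety is that the contribution of a $(-2)$-curve to $c_2$ of a singular fibre depends on the Kodaira type of that fibre, and I must control how the involution $\sigma$ organizes the fibres of $p$ over $B$ relative to the fibres of $p'$ over $\pp^1$; in particular I must ensure I am not double-counting contributions from a single singular fibre of $p'$ whose preimage splits into two fibres of $p$. The disconnectedness from (ii) is precisely what prevents overcounting — over a general point of $\pp^1$ the fibre splits, so each singular fibre of $p'$ containing some $C_i$ pulls back to \emph{two} singular fibres of $p$, and the minimal Euler contribution of an $I_0^*$ or $I_2$-type fibre hosting a single node-resolving $(-2)$-curve gives the clean factor that turns $24$ into the bound $2n\le 24$. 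Verifying the exact local Euler contribution (the step where one argues each relevant fibre of $p$ contributes at least $2$) is where I would spend the most care, likely by a short case analysis of the possible Kodaira fibre types compatible with containing a single $\sigma$-antiinvariant $(-2)$-curve.
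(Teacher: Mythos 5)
Your proposal has genuine gaps in both parts, and in each case the missing ingredient is precisely the content of the paper's argument.

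For (ii), the mechanism you propose cannot work. The formula $\chi(\OO_S)=2\chi(\OO_X)+\tfrac12 L(L+K_X)=2$ holds identically for the double cover, whether $\pi^*F'$ is connected or not, so comparing Euler characteristics of structure sheaves distinguishes nothing. Likewise, your claim that ``a connected elliptic pullback via Hurwitz would be ramified'' is false: the general fibre $F'$ is disjoint from the branch locus $C$ (which sits in finitely many fibres), so $\pi$ restricted over $F'$ is \emph{\'etale} in both cases; connectedness is governed solely by whether the $2$-torsion bundle $\OO_{F'}(L)$ is trivial. The paper's actual proof of disconnectedness is by contradiction and is genuinely delicate: if $\pi^*F'=F$, then every fibre of $p'$ containing $(-4)$-curves pulls back to a \emph{multiple} fibre of $p$ of even multiplicity; since a multiple fibre cannot be simply connected, those fibres must be of type $mI_r$ (excluding $II$, $III$, $IV$, whose pullbacks $III$, $\tilde D_4$, $\tilde E_6$ are trees of rational curves); one then compares the canonical bundle formula for $p:S\to\pp^1$ with $\pi^*$ of the canonical bundle formula for $p':X\to\pp^1$ (via $K_S=\pi^*(K_X+L)$) and finds the two expressions for $K_S$ differ by a nonzero multiple of $F$ in either case (multiple fibre of $X'$ inside or outside $\mathcal J'$). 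None of this appears in your outline.

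For (i), your Euler-number count on $S$ rests on the claim that each fibre of $p$ hosting $k$ of the $(-2)$-curves $\gamma_i=\pi^{-1}(C_i)$ has Euler number at least $2k$. As stated, for arbitrary Kodaira types this is \emph{false}: an $I_0^*$ fibre has Euler number $6$ but contains $4$ pairwise disjoint $(-2)$-curves, and $I_n^*$, $III^*$ give similar violations. The inequality does hold here, but only because of the classification (the paper's Proposition \ref{prop-ellip} and Remark \ref{obs3.1}): after contracting $(-1)$-curves to a relatively minimal rational elliptic surface $X'$, the fibres containing the $(-4)$-curves are exactly of type $II$, $III$, $IV$ or $mI_r$ ($r\le 9$), whose pullbacks are $III$, $\tilde D_4$, $\tilde E_6$, $mI_{2r}$ with the $\gamma_i$ in controlled positions. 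That classification is the bulk of the section's work (a careful case analysis of how $(-1)$-curves can meet the $(-4)$-curves inside fibres), and you defer it to ``a short case analysis.'' Your geometric picture is also backwards: the fibres of $p'$ containing some $C_i$ lie over \emph{branch} points of $B\to\pp^1$ and pull back to a \emph{single} (multiple) fibre of $p$, not to two fibres --- a preimage of a fibre meeting the branch locus is automatically connected. Note finally that the paper's bound does not require (ii) at all: it counts on $X'$, where $c_2(X')=12$ and each $(-4)$-curve contributes at least $1$ to the Euler number of its fibre ($1\le 2$ for $II$, $2\le 3$ for $III$, $4\le 4$ for $IV$, $r\le r$ for $I_r$), giving $n\le 12$ directly; your ordering makes (i) depend on the flawed proof of (ii).
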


The proof will be given throughout this section.\\

As we have seen in the proof of Proposition \ref{thm_elliptic}, we can write
$$
K_S \sim 2g(B)F + \sum_{i=1}^{r} \frac{m_i-1}{m_i}F,
$$
where $m_1F_1,...,m_rF_r$ are the multiple fibres of $p:S\to B$.
\

First of all we are going to describe the singular fibres of $p'$ containing
$(-4)$-curves.

\begin{prop}\label{prop-ellip}
There exists a birational morphism $\epsilon:X \to X'$, where $X'$ is a relatively minimal elliptic surface
 with the following property:

for $i=1,...,n$, the curve $C_i':=\epsilon (C_i)$ is an irreducible component
of a fibre of type  $II, III, IV$ or $mI_r$ with $1\leq r \leq 9$ and $m\geq 1$.
\end{prop}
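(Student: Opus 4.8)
The plan is to pass first to a relatively minimal model, then identify the Kodaira type of the fibre through which each $(-4)$-curve passes, using both the numerical geometry of rational elliptic surfaces and the double cover structure.

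First I would contract the $(-1)$-curves contained in fibres of $p':X\to\pp^1$, obtaining a birational morphism $\epsilon:X\to X'$ onto a relatively minimal elliptic surface. Since $X$ is rational, $X'$ is a relatively minimal \emph{rational} elliptic surface, so that $c_2(X')=12$ and $\rho(X')=10$. Each $C_i$ is a $(-4)$-curve, hence not contracted, and its image $C_i':=\epsilon(C_i)$ is an irreducible rational curve contained in a fibre $\Phi'$ of the fibration on $X'$. Because $\epsilon$ is a composition of blow-downs one has $(C_i')^2\geq C_i^2=-4$, while in a relatively minimal elliptic surface every proper component of a reducible fibre is a $(-2)$-curve and every irreducible singular fibre has arithmetic genus one. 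As $C_i'$ is rational, this forces $(C_i')^2\in\{-2,0\}$: either $C_i'$ is a $(-2)$-curve component of a reducible fibre, or $C_i'=\Phi'$ is an irreducible rational fibre (type $I_1$ or $II$). Thus it remains only to bound $r$ in type $I_r$ and to exclude the fibre types $I_0^*$, $I_r^*$, $II^*$, $III^*$, $IV^*$.

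The bound on $I_r$ is immediate from the Shioda--Tate formula on $X'$: since $\rho(X')=10$ one has $\sum_v (m_v-1)\leq 8$, where $m_v$ denotes the number of components of the fibre $v$; for a fibre of type $I_r$ this gives $r-1\leq 8$, i.e. $r\leq 9$, as required. To exclude the remaining additive types I would use the involution. Let $\sigma$ be the involution of the double cover $\pi:S\to X$, let $\widetilde C_i:=\pi^{-1}(C_i)$ be the associated $(-2)$-curve in $S$, and recall that the elliptic fibration $p:S\to B$ is $\sigma$-invariant, fitting in the diagram \eqref{eq_elliptic}; let $\tau:B\to B$ be the induced involution, whose quotient is $\pi':B\to\pp^1$. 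One first checks that $\pi^{-1}(\Phi_{t})$ is connected over each point $t$ whose fibre contains a $(-4)$-curve (the cover of $\Phi_t$ branched along the nonempty $C\cap\Phi_t$ cannot be trivial), so that $\widetilde C_i$ is pointwise fixed by $\sigma$ and lies in a single fibre $G=\pi^{-1}(\Phi')$ of $p$ over a ramification point $s_0$ of $\pi'$. Comparing the action of $\sigma$ on the normal direction of $\widetilde C_i$, which is $-1$ since $\widetilde C_i$ is in the fixed locus, with the action of $\tau$ on $T_{s_0}B$, which is also $-1$ since $s_0$ is a ramification point, and writing $G$ locally as vanishing to order $\mu$ along $\widetilde C_i$, yields $(-1)^\mu=-1$, so the multiplicity $\mu$ of $\widetilde C_i$ (hence of $C_i'$ in $\Phi'$) is odd. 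Away from $\widetilde C_i$ the map $\pi$ restricts to an \'etale double cover $G\setminus\widetilde C_i\to\Phi'\setminus C_i'$, and the requirement that this cover close up to a genus-one Kodaira fibre $G$ of the minimal surface $S$, together with the oddness just proved, eliminates the types carrying a multiplicity-$\geq 2$ component meeting $C_i'$, leaving only $II$, $III$, $IV$ and $mI_r$.

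The main obstacle is precisely this last step: ruling out the fibres $I_0^*,I_r^*,II^*,III^*,IV^*$. This amounts to classifying the involutions $\sigma$ of the genus-one Kodaira fibres $G$ that fix a full component pointwise and whose quotient is again (after minimalization) a genus-one fibre, or equivalently to deciding for which Kodaira types $\Phi'$ there exists a double cover branched exactly along a single reduced component $C_i'$. I expect this to require a careful case-by-case analysis of the dual graphs and component multiplicities of the Kodaira fibres; the numerical input that $X'$ is a rational elliptic surface and the oddness of the multiplicity of $C_i'$ are the two ingredients that make this case analysis finite and, I anticipate, tractable.
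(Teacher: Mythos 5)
Your strategy is genuinely different from the paper's: you work top-down (pass to the relatively minimal model $X'$, invoke the Kodaira classification, then try to exclude types), whereas the paper works bottom-up inside $X$, listing the possible configurations of $(-1)$-curves and $(-4)$-curves in a fibre and contracting them explicitly. Your first two steps (reduction to a relatively minimal rational elliptic surface, and $r\leq 9$ from $\rho(X')=10$) are fine, but the proof is incomplete at exactly the point that carries the content of the proposition: the exclusion of the types $I_0^*$, $I_r^*$, $II^*$, $III^*$, $IV^*$. You defer this to a case-by-case analysis that you ``anticipate'' to be tractable; that analysis \emph{is} the theorem. The paper does this work concretely: by Remark \ref{remark2.1} every $(-1)$-curve $\theta$ in a fibre meets some $C_i$; evenness of $C$ forces $\theta C=2\theta L\geq 2$; Zariski's lemma bounds $\theta C_i\leq 2$; each $C_i$ meets at most three $(-1)$-curves (otherwise its image would be a rational curve of self-intersection $\geq 0$ inside a fibre); and chasing these constraints produces exactly the configurations contracting to $II$, $III$, $IV$, $mI_r$. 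Nothing in your write-up substitutes for this chase.

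Moreover, the tool you propose for the exclusion is circular at this point of the paper. Your parity argument requires $s_0$ to be a ramification point of a degree-two map $\pi':B\to\pp^1$, i.e.\ it presupposes that $\pi^*(F')$ is disconnected for the general fibre $F'$. At the stage of Proposition \ref{prop-ellip} both possibilities are still open (cf.\ Lemma \ref{elliptic}); disconnectedness is proved only afterwards, in Section 5, and its proof uses Proposition \ref{prop-ellip} and Remark \ref{obs3.1}. In the connected case the induced involution on $B$ is the identity, there are no ramification points, and your local computation gives $(-1)^\mu=+1$, i.e.\ no information; connectedness of the preimage of the special fibre does not rescue you, since in that case every fibre has connected preimage. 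Finally, the assertion ``each $C_i$ is a $(-4)$-curve, hence not contracted'' is unjustified and in fact fails: in the paper's type $IV$ configuration, where $F'=4\theta_1+4\theta_2+4\theta_2'+3C_1+C_2+C_3+C_4$ with three $(-1)$-curves through $C_1$, the image of $C_1$ becomes a $(-1)$-curve once the $\theta$'s are contracted and must itself be contracted to reach $X'$. So the dichotomy $(C_i')^2\in\{-2,0\}$ on which your whole reduction rests breaks down in one of the very cases the proposition describes, and a correct top-down argument would have to account for $(-4)$-curves that die in the minimalization.
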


\begin{proof}
Notice first that, since $K_X^2=-n$, $X$ is not relatively minimal and so there exists
a $(-1)$-curve $\theta_1$ such that $\theta_1$ is a component of one of the fibres  $F'$ of the elliptic fibration of $X$.
Besides, by Remark \ref{remark2.1}, there exists a $(-4)$-curve $C_1$,
with $\theta_1C_1 = \alpha \geq 1.$
As we have seen above $C_1 $ is contained in some fibre $ F'$ and so also $(\theta_1 + C_1)\subset
F'$. As a consequence, by Zariski's lemma
$$(\theta_1 + C_1)^2= -1 +2 \alpha -4 \leq 0$$
 and so $\alpha\leq 2.$

 \

If $\alpha =2$,  $2\theta_1 +C_1\subseteq F'$ and $(2\theta_1
+C_1)^2=0$ mean that   $m(2\theta_1 + C_1)=F'$ for some $m\in \N, m\geq 1$. Contracting
$\theta_1$, $\epsilon_1:X\to X_1$, we obtain a fibre of type    $I_1$, $mI_1$ or $ II $ in $X_1$ and
$K^2_{X_1}=-n+1$.

\

If $\alpha =1$, since $\theta_1 L \geq 1$, one has $\theta_1 C\geq 2$
and so $\theta_1$ meets, at
least, another $(-4)$-curve $C_2$. As in the precedent paragraph we conclude that
 $\theta_1 C_2=1$.
Let  $\epsilon_1:X \to X_1$ be the
blowing down of $\theta_1$ and $\epsilon_1(C_i)=\tilde{C}_i$. Then $\widetilde{C}_1$ and $\widetilde{C}_2$ are curves
with self-intersection $-3$ and $\widetilde{C}_1 \widetilde{C}_2=1$. As
before, we have in this fibre  a $(-1)$-curve $\widetilde{\theta}_2$.
Hence either $\widetilde{\theta}_2$ comes from a $(-2)$-curve in $S$, or $\widetilde{\theta}_2$ comes from a $(-1)$-curve  in $S$.

If $\widetilde{\theta}_2$ comes from a $(-2)$-curve $\theta_2$, then  it is easy to see that  $4\theta_1 + 2\theta_2 +C_1 + C_2$ is a fibre. Contracting $\theta_1$ and
    $\theta_2$ we obtain a fibre of type $III$.

If $\widetilde{\theta}_2$ comes from a $(-1)$-curve, there are three possibilities:
\begin{enumerate}
  \item      $\theta_2 C_1=\theta_2 C_2=1$. Then we have
    $(2\theta_1 + 2\theta_2 +C_1+C_2)^2=0$  and so we have a  fibre or a rational multiple of a fibre . Contracting $\theta_1$
    and $\theta_2$, we obtain that an integer multiple of the image of $2\theta_1 + 2\theta_2 +C_1+C_2$ is a fibre of type $I_2$.

  \item  $\theta_2C_1=\theta_2C_2=0$.
  First of all, let us point out that 
 every $(-4)$-curve  $C_i$ meets, at most, three  $(-1)$-curves (possibly infinitely near).
    In fact if $C_i$ meets  four   $(-1)$-curves, it is not very difficult to see that contracting these the image of $C_i$ is a smooth rational curve  with self-intersection $0$. This is impossible because the fibres of an elliptic fibration have always $p_a=1$. Thus,  since
    $\widetilde{C}_1^2=-3$, there exists  a $(-2)$-curve   $\delta$ with
    $\delta
    C_1=1$ and $\delta \theta_2=1$.
    There are also two, and only two,
    $(-4)$-curves $C_3$ and $C_4$
   intersecting $\theta_2$, this is $\theta_2 C_3=\theta_2 C_4=1$. Then we have
    $C_3 + C_4 + 4\theta_2 + 2\delta + C_1+ ...$ in the fibre,
    but as we have seen before
    $C_3 + C_4 + 4\theta_2 + 2\delta$ is a fibre, and we obtain a contradiction.

   \item  $\theta_2 C_2=0$ and $\theta_2 C_1=1$.

    If there is another $(-1)$-curve $\theta_2'$ such that $\theta_2' C_1=1$,
     we have  $\theta_2' C_2=0$; if not, $\theta_2' C_2=1$ and then
     $(2\theta_1 + 2\theta_2' + C_1 + C_2)^2=0$, but we have
     $2\theta_1 + 2\theta_2' + C_1 + C_2 + \theta_2\subseteq f$,
     which is absurd.
     We have seen above that there exist $C_3$ and $C_4$,
    different $(-4)$-curves, such that $\theta_2 C_3=1$ and $\theta_2'C_4=1$.
    Then we have $4\theta_1 + 4\theta_2 + 4\theta_2' + 3C_1+ C_2+ C_3 + C_4 \subseteq
    f$ and
    $(4\theta_1 + 4\theta_2 + 4\theta_2' + 3C_1+ C_2+ C_3 + C_4)^2=0$, we get a
    fibre of type $IV$.

     If there is not another $(-1)$-curve intersecting $C_1$ and since $\theta_2
    C_2=0$, there exists another $(-4)$-curve $C_3$ with $\theta_2
    C_3=1$. As before, blowing down $\theta_1$ and $\theta_2$,
    $\widetilde{C}_3$ and $\widetilde{C}_2$ are curves with
    self-intersection $-3$, so there exists another
     $(-1)$-curve $\theta_3$. Since, as before, $\theta_3 C_3=1$, then either $\theta_3
    C_2=1$ or $\theta_3 C_2=0$. If $\theta_3 C_2=1$ we
    have $2\theta_1 + 2\theta_2 + 2\theta_3 + C_1 +C_2 + C_3 \subseteq
    f$ and $(2\theta_1 + 2\theta_2 + 2\theta_3 + C_1 +C_2 +
    C_3)^2=0$, so we
    obtain a fibre
    (or a rational multiple of a fibre)
    of type $I_3$. In the other case, $\theta_3 C_2=0$, there is another $(-4)$-curve
    $C_4$ ..., repeating the same argument we obtain a fibre of
    type $mI_r, r\geq 4$ and $m\geq 1$.

\end{enumerate}
    In conclusion, since $n$ is a finite number and $K_X^2=-n$ there are  $n$
     $(-1)$-curves  in the fibres
    and after contracting them by  $\epsilon: X \to X',$ one obtains a relatively minimal rational elliptic
     surface $X'$ with these singular fibres.

    By \cite{har-lang}
    a connected fibre on a relatively minimal rational elliptic surface
    has at most nine irreducible components, and so in particular   for $mI_r$ we have $1\leq r \leq 9$.
\end{proof}

 Denote by $F'_j$ the elliptic fibres
 in $X$ containing $(-4)$-curves and by $$\mathcal{J}=\{F'_j, j=1,...,n'\}$$ the set of these
 fibres.
 Also denote by
$\mathcal{J'}=\{\epsilon(F'_j), j=1,...,n'\}$ the image of $\mathcal{J}$ in $X'$.

\

Keeping this notation:
 \begin{cor}
 The number $n$ of $(-4)$-curves in $C$ is at most $12$.
 In particular, if $n=12$ the singular fibres of the elliptic fibration of $X'$ are
 all in $J'$.
 \end{cor}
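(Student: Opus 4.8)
The plan is to bound $n$ by combining the fibre-type information from Proposition~\ref{prop-ellip} with the Euler-number constraint coming from $\mathcal{X}(\OO_S)=2$. Recall that $S$ is a relatively minimal elliptic surface with $\mathcal{X}(\OO_S)=2$, hence by Noether's formula $c_2(S)=12\,\mathcal{X}(\OO_S)=24$. The total Euler number is the sum of the Euler numbers of the singular fibres of $p:S\to B$, and since the fibration of $S$ is the pullback under $\pi$ of the fibration $p':X\to\pp^1$, each singular fibre $F'_j\in\mathcal{J}$ containing $(-4)$-curves contributes (possibly doubled, depending on whether $\pi^*F'_j$ splits) to this count. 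So first I would translate each fibre type appearing in Proposition~\ref{prop-ellip} into its Euler number: the types $II$, $III$, $IV$, $I_r$ have Euler numbers $2$, $3$, $4$, $r$ respectively.

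Next I would relate the number of $(-4)$-curves $C_i$ sitting inside a given fibre $F'_j$ to that fibre's Euler number, reading off the correspondence established in the proof of Proposition~\ref{prop-ellip}. There the construction shows that in a type $II$ fibre one gets a single $(-4)$-curve, in type $III$ a single one, in type $IV$ two of them, and in type $I_r$ one sees that the number of $(-4)$-curves is bounded by $r$ (each contraction of a $(-1)$-curve raises the self-intersection of the adjacent $(-4)$-curves, and the total length of the cycle $I_r$ limits how many $(-4)$-components can occur). The key inequality to extract is that the number of $(-4)$-curves contained in $F'_j$ is at most (a fixed fraction of) the Euler number $e(F'_j)$; concretely I expect a bound of the shape $\#\{C_i\subset F'_j\}\le e(F'_j)$, with the worst ratio occurring for the $I_r$ fibres.

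Then I would assemble the global count. Summing over all fibres of $p'$ that contain $(-4)$-curves and pulling back to $S$, the contributions to $c_2(S)=24$ give $\sum_j (\text{contribution of }F'_j)\le 24$, and dividing appropriately yields $n=\sum_j \#\{C_i\subset F'_j\}\le 12$. The factor of $2$ is exactly what sharpens $16$ (the crude Miyaoka bound obtained earlier) down to $12$: because $S\to X$ is a double cover, the singular fibres over the branch data are counted with the correct multiplicity in $c_2(S)=24$, forcing $n\le 24/2=12$. For the equality statement, if $n=12$ every unit of Euler number must be used by a fibre in $\mathcal{J}'$, leaving no room for singular fibres of $X'$ outside $\mathcal{J}'$; hence all singular fibres lie in $\mathcal{J}'$.

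The main obstacle I anticipate is making the per-fibre inequality $\#\{C_i\subset F'_j\}\le e(F'_j)$ precise and uniform across all the allowed types, particularly the $I_r$ case where one must carefully verify, using the contraction bookkeeping of Proposition~\ref{prop-ellip} together with Zariski's lemma, that the number of $(-4)$-components never exceeds what the Euler number permits. One must also be careful about whether $\pi^*F'_j$ is connected or split, since this affects the multiplicity with which the fibre is counted in $c_2(S)$; reconciling this with the $K3$/elliptic dichotomy (and with part (ii) of the theorem, that $\pi^*F'$ is disconnected) is where the argument needs the most attention to avoid an off-by-a-factor-of-two error.
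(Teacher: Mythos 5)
Your core idea --- bounding $n$ by summing Euler numbers of singular fibres against the fibre classification of Proposition~\ref{prop-ellip} --- is exactly the paper's, but the paper runs the count on $X'$ rather than on $S$, and that choice dissolves every complication you flag. Since $X'$ is a relatively minimal rational elliptic surface, $c_2(X')=12$, and by \cite{beauville1} (Lemma VI.4) this equals the sum of the Euler numbers of the singular fibres. The types allowed by Proposition~\ref{prop-ellip} have $e(mI_r)=r$, $e(II)=2$, $e(III)=3$, $e(IV)=4$, and each such fibre involves at most $e$ of the $(-4)$-curves, so $n\le 12$ at once; if $n=12$ the fibres in $\mathcal{J'}$ exhaust $c_2(X')$, leaving no room for singular fibres outside $\mathcal{J'}$. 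No double-cover bookkeeping, no connected-versus-split discussion, is needed.

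Your route through $c_2(S)=24$ can be repaired, but as written it has concrete gaps. First, your per-fibre counts are wrong: in the paper's analysis a type $III$ fibre contains \emph{two} $(-4)$-curves (the fibre on $X$ is $4\theta_1+2\theta_2+C_1+C_2$) and a type $IV$ fibre involves \emph{four} of them ($4\theta_1+4\theta_2+4\theta_2'+3C_1+C_2+C_3+C_4$), not one and two as you claim; these slips happen to fall on the safe side of the inequality, but they show the key inequality was never checked against the actual configurations. Second, the doubling claim (``singular fibres over the branch data are counted with multiplicity $2$ in $c_2(S)$'') is false for type $II$: its pull-back is a fibre of type $III$ (Remark~\ref{obs3.1}), so the Euler number goes from $2$ to $3$, not to $4$. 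Hence your chain $n\le\sum_j e(F'_j)\le\tfrac12 c_2(S)=12$ breaks at the second inequality: within your framework, eight type-$II$ fibres would contribute $8\times 3=24$ to $c_2(S)$ while $\sum_j e(F'_j)=16>12$, so $\sum_j e(F'_j)\le 12$ simply does not follow from $c_2(S)=24$ plus doubling. The correct patch is to prove the per-fibre inequality directly on the $S$ side --- the number of $(-4)$-curves in $F'_j$ is at most half the Euler number of the reduced fibre $\pi^*(F'_j)$, which holds case by case: $r\le\tfrac12(2r)$ for $I_{2r}$, $1\le\tfrac32$ for $III$, $2\le\tfrac12(6)$ for $\tilde{D}_4$, $4\le\tfrac12(8)$ for $\tilde{E}_6$ --- and then sum; but at that point you are doing strictly more work than the paper's one-line count on $X'$, which is where the factor of $12$ (rather than $24/2$) lives naturally.
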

\begin{proof}
 Since $X'$ is a relatively minimal rational elliptic surface we have that
 $c_2(X')=12$ and by (\cite{beauville1}, Lemma VI.4), we know that
 $$c_2(X')=\sum_{s}\mathcal{X}_{top}(F'_s),$$
 with $F'_s$ the singular
fibres.

Also, noticing that $\mathcal{X}_{top}(I_n)=n$, $\mathcal{X}_{top}(II)=2, \ \mathcal{X}_{top}(III)=3$ and
 $\mathcal{X}_{top}(IV)=4$,  the result follows by Proposition \ref{prop-ellip}.
 \end{proof}

\begin{remark}\label{obs3.1}
 {\rm The pull-back $\pi^*(F'_j)$, where $F'_j\in \mathcal{J}$, will be one of following types:
 \begin{itemize}
\item If $\epsilon(F'_j)$ is of type $mI_r$, this is $F'_j=m(\sum_1^{r} C_i +
 2\theta_i)$,
 then $\pi^*(F'_j)=m(\sum_1^{r} 2\gamma_i + 2\hat{\theta}_i)=2m(\sum_1^{r} \gamma_i +
 \hat{\theta}_i)$, therefore  $\pi^*(F'_j)=2mI_{2r}$ with $1\leq r\leq 9$ and $m\geq 1$.

 \item If $\epsilon(F'_j)$ is of type $II$, then $\pi^*(F'_j)=\pi^*(2\theta_1 +
 C_1)=2(\hat{\theta}_1+\gamma_1)$, therefore $\pi^*(F'_j)=2III$.

\item If $\epsilon(F'_j)$ is of type $III$, then $F'_j=4\theta_1 +
2\theta_2 +C_1 +
 C_2$ with $\theta_2$ a $(-2)$-curve such that $\theta_2C=0$, hence
$\pi^*(F'_j)=2(2\hat{\theta}_1 +\theta^1_2+\theta^2_2+ \gamma_1+ \gamma_2)$, so $\pi^*(F'_j)=2\tilde{D}_4$.

\item Finally, if $\epsilon(F'_j)$ is of type $IV$, then
$\pi^*(F'_j)=2(2\hat{\theta}_1+ 2\hat{\theta}_2 + 2\hat{\theta}_3 + 3\gamma_1+ \gamma_2 + \gamma_3)$, so $\pi^*(F'_j)=2\tilde{E}_6$.
 \end{itemize} }
 \end{remark}

 \

 We want now to understand $\pi^*(F')$. So,
  we begin by supposing  that $\pi^*(F')=F$.  In this case $B=\pp^1$
 and we can consider the commutative diagram:
$$
\begin{CD}\ S@>p>>\mathbb{P}^1\\ @V\pi VV  @VV \pi' V\\ X@>p' >>\mathbb{P}^1
\end{CD}
$$

 \



\begin{lem}
If $\pi^*(F')=F$,  every fibre   $\epsilon(F'_j)\in \mathcal{J'}$ is a fibre of type
$mI_r$. In particular,
$$F'_j= m_j (\sum_1^{r_j} C_i + 2\theta_i),$$
with $n$  $(-1)$-curves $\theta_i$, $1\leq r_j\leq 9$ and $m_j\geq 1$.
\end{lem}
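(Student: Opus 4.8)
The plan is to read off each $\pi^{*}(F'_j)$ from the pull-back list of Remark \ref{obs3.1}, to recognise it as a fibre of the relatively minimal elliptic surface $p\colon S\to B$, and then to exclude the additive types $II,III,IV$ by the fact that a multiple fibre can never have additive reduction.

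First I would pin down $\pi'$. Under the standing hypothesis $\pi^{*}(F')=F$ the induced base map $\pi'\colon B\to\pp^1$ of diagram \eqref{eq_elliptic} has degree one: since $\deg\pi=2$ its degree is $1$ or $2$, and if it were $2$ then for general $t$ the set $\pi'^{-1}(t)$ would consist of two points and $\pi^{-1}(F')=p^{-1}\big(\pi'^{-1}(t)\big)$ would be a disjoint union of two fibres of $p$, contradicting the connectedness of $F$. Thus $\pi'$ is an isomorphism, and writing $t_j=p'(F'_j)\in\pp^1$ we obtain, scheme-theoretically, $\pi^{*}(F'_j)=p^{*}\big(\pi'^{-1}(t_j)\big)$, a single fibre of $p$.

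Now I would invoke Remark \ref{obs3.1}: according as $\epsilon(F'_j)$ has type $II$, $III$, $IV$ or $mI_r$, it computes $\pi^{*}(F'_j)$ to equal $2\,III$, $2\tilde D_4$, $2\tilde E_6$ or $2m\,I_{2r}$. In each case the outer coefficient is $\geq 2$, so by the previous step the fibre of $p$ over $\pi'^{-1}(t_j)$ is a \emph{multiple} fibre whose support has type $III$, $I_0^{*}$, $IV^{*}$ or $I_{2r}$ (using $\tilde D_4=I_0^{*}$ and $\tilde E_6=IV^{*}$). Since $S$ is minimal the fibration $p$ is relatively minimal, and on such a surface the support of any multiple fibre is of type $I_b$ for some $b\ge 0$ (Kodaira; see \cite{bpv}). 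The three additive types $III$, $I_0^{*}$, $IV^{*}$ are therefore impossible, which excludes $II$, $III$, $IV$. As Proposition \ref{prop-ellip} leaves only $mI_r$ as the remaining possibility, every $\epsilon(F'_j)\in\mathcal J'$ is of this type with $1\le r\le 9$; the explicit form $F'_j=m_j\big(\sum_{1}^{r_j}C_i+2\theta_i\big)$ with $m_j\ge 1$ is then precisely the description of such a fibre in $X$ recorded in the proof of Proposition \ref{prop-ellip}, the $n$ curves $\theta_i$ being the $(-1)$-curves contracted by $\epsilon$.

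I expect the one genuinely load-bearing step to be the middle one: turning the bare identity $\pi^{*}(F'_j)=2\,(\cdots)$ of Remark \ref{obs3.1} into the statement that $p$ has a \emph{multiple} fibre over $\pi'^{-1}(t_j)$. This needs both $\deg\pi'=1$ (so that the preimage is one fibre rather than two) and the outer factor $2$; it is exactly this multiple-fibre reading on which the prohibition of additive multiple fibres then bites. Once that is in place the conclusion is immediate.
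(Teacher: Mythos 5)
Your proof is correct and takes essentially the same route as the paper: both arguments read off from Remark \ref{obs3.1} that $\pi^*(F'_j)$ is a \emph{double} fibre of $p$ and then exclude types $II$, $III$, $IV$ because the support of a multiple fibre on a relatively minimal elliptic fibration must be of type $I_b$ (the paper phrases this same fact from \cite{bpv} as the support not being simply connected, which rules out $III$, $\tilde D_4$, $\tilde E_6$). Your preliminary check that $\deg\pi'=1$ merely makes explicit what the paper records as ``in this case $B=\pp^1$'' just before the lemma.
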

\begin{proof}
If $\pi^*(F')=F$, by Remark \ref{obs3.1} the pull-back  of any fibre containing a $(-4)$-curve is a double fibre of the elliptic fibration in $S$.
Since for every multiple fibre $mF_0$ in a elliptic fibration $F_0$
cannot be simply-connected (cf. \cite{bpv}), looking at the description of the fibres in Proposition \ref{prop-ellip}  we obtain the statement.
\end{proof}

It is well known (see \cite{cossec-dolga}) that every relatively minimal rational elliptic surface has at most
one multiple fibre.

We analyse the different possibilities for the multiple fibres to
prove the next proposition:

\begin{prop}
The elliptic fibration $F'$ of  $p':X \to \pp^1$, diagram (\ref{eq_elliptic}),
satisfies $$\pi^*(F')=F_1 + F_2.$$
\end{prop}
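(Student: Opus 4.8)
The plan is to argue by contradiction: assume $\pi^*(F')=F$ (the connected case of Lemma \ref{elliptic}) and derive an impossibility, which forces the disconnected conclusion $\pi^*(F')=F_1+F_2$. In the connected case $B=\pp^1$ and, by the preceding Lemma, every fibre $\epsilon(F'_j)\in\mathcal{J}'$ is of type $m_jI_{r_j}$. Writing the scheme fibre of $p'$ over the corresponding point as $\Phi_j=m_j(\sum C_i+2\theta_i)$, Remark \ref{obs3.1} gives $\pi^*\Phi_j=2m_j(\sum\gamma_i+\hat\theta_i)=2m_jF_j$, a multiple fibre of $p\colon S\to\pp^1$ of multiplicity $2m_j$ with reduced fibre $F_j:=\sum\gamma_i+\hat\theta_i$. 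The crucial feature to keep track of is that this multiplicity $2m_j$ is \emph{even}, while each $C_i$, lying in the branch locus, satisfies $\pi^*C_i=2\gamma_i$.

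Next I would pin down the canonical divisors at the level of honest divisors, not merely of classes. Since $R^1p_*\OO_S$ has degree $-\chi(\OO_S)=-2<0$ and $g(B)=0$, the Leray spectral sequence gives $q(S)=0$, hence $p_g(S)=1$ and $h^0(S,K_S)=1$; as $K_B$ twisted by the fundamental line bundle has degree $-2+2=0$ on $\pp^1$, the canonical bundle formula identifies the unique effective canonical divisor as $K_S=\sum_j(2m_j-1)F_j$. On the other hand, from $\pi_*\OO_S=\OO_X\oplus\OO_X(-L)$ and $h^0(X,K_X)=0$ one obtains $h^0(X,K_X+L)=h^0(S,K_S)=1$, so $|K_X+L|$ also consists of a single divisor $D_0$. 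Because $K_S=\pi^*(K_X+L)$, the divisor $\pi^*D_0$ is effective and canonical, whence $\pi^*D_0=\sum_j(2m_j-1)F_j$ as \emph{divisors}.

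Finally I would extract a parity contradiction from the coefficient of a fixed curve $\gamma_i=\pi^{-1}(C_i)$. On the right-hand side $\gamma_i$ occurs in $F_j$ with multiplicity $1$ and in no other $F_{j'}$, so its coefficient in $\sum_j(2m_j-1)F_j$ is the odd number $2m_j-1$. On the left-hand side, since $\pi^*C_i=2\gamma_i$, the coefficient of $\gamma_i$ in $\pi^*D_0$ equals $2\,\mathrm{ord}_{C_i}(D_0)$, an even number. An odd integer cannot equal an even one, giving the desired contradiction; hence $\pi^*(F')=F$ is impossible and $\pi^*(F')=F_1+F_2$.

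I expect the delicate point to be the passage from numerical or $\Q$-linear equivalence to an equality of genuine divisors. A priori one only gets $K_X+L\equiv_{\Q}\sum_j\frac{2m_j-1}{2m_j}\Phi_j$, and a mere $\Q$-class identity carries no contradiction, since all fibres are numerically proportional. It is precisely the vanishing $q(S)=0$, which yields $h^0(K_S)=h^0(K_X+L)=1$ and hence uniqueness of the effective canonical representatives on both $S$ and $X$, that upgrades the relation to the divisor-level identity $\pi^*D_0=\sum_j(2m_j-1)F_j$ on which the parity argument rests. Verifying that $q(S)=0$ and that the canonical bundle formula indeed produces this exact effective divisor is therefore the heart of the proof.
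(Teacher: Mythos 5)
Your proof is correct in substance, but its mechanism is genuinely different from the paper's. Both arguments assume $\pi^*(F')=F$ and play the canonical bundle formula for $p\colon S\to\pp^1$ against the double-cover formula $K_S=\pi^*(K_X+L)$. The paper, however, also expresses $K_X$ through the canonical bundle formula of the rational elliptic surface (writing $K_X\equiv -F'+\frac{m-1}{m}F'+\theta_1+\cdots+\theta_n$) and reaches the contradiction by comparing the $\Q$-coefficients of the numerical class of $F$ in the two resulting expressions for $K_S$: they differ by exactly $1$, absurd since $F\not\equiv 0$. This forces the paper into two cases according to whether the unique multiple fibre $mD$ of $X'$ lies in $\mathcal{J}'$ or not. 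You instead upgrade the comparison to an identity of honest divisors, using $q(S)=0$ (hence $h^0(K_S)=h^0(K_X+L)=1$, so $|K_S|$ has a unique member), and then read off the parity of the coefficient of a fixed curve $\gamma_i$: even on the side $\pi^*D_0$ because $\pi^*C_i=2\gamma_i$, odd (namely $2m_j-1$) on the side built from the multiple fibres. This needs no expression for $K_X$ and no case distinction, so it is arguably cleaner; its price is the extra input $q(S)=0$, which you justify correctly via $\deg R^1p_*\OO_S=-\chi(\OO_S)=-2$ and Leray.

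One inaccuracy should be repaired. Your identity $K_S=\sum_j(2m_j-1)F_j$ sums only over multiple fibres lying over $\mathcal{J}$, but $X'$ may have one further multiple fibre $mD$ with $mD\notin\mathcal{J}'$; since the cover is unramified over it and $p$ has connected fibres under your standing assumption, $m\,\pi^*D$ is then a multiple fibre of $p$ of multiplicity $m$, and the correct identity is $\pi^*D_0=\sum_j(2m_j-1)F_j+(m-1)\pi^*D$. This omitted dichotomy is precisely what the paper handles by its two cases. Fortunately your contradiction is insensitive to the extra term: $D$ contains no branch curve, so $\pi^*D$ does not contain $\gamma_i$, and the coefficient of $\gamma_i$ on the right-hand side is still the odd number $2m_j-1$, while on the left it is the even number $2\,\mathrm{ord}_{C_i}(D_0)$. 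With that one correction the proof stands.
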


\begin{proof}
Under the assumption that $\pi^*(F')=F$,
let $mD$ be the unique multiple fibre in $X'$, if it has any, otherwise let $m=1$
and $D$ be any fibre.


 First of all assume that $mD$ $\notin$  $\mathcal{J'}$, then $m_j=1$ in $F'_j$ for all $j=1...n'$. The
 multiple fibres in $S$ are $\pi^*(mD)$ of multiplicity $m$, and
 $\pi^*(F'_j)$
 of multiplicity $2$,
 $j=1,...,n'$. Thus, since $\mathcal{X}(\mathcal{O}_S)=2, $  and the elliptic fibration has base $\pp^1$,

  $$K_S \equiv \frac{(m-1)}{m}F + \frac{n'}{2}F.$$
  On the other hand, since
  $$K_X \equiv -F' + \frac{(m-1)}{m}F'+ \theta_1 + ... + \theta_n,$$
  by
  the double cover formulas we obtain
  $$K_S\equiv -F + \frac{(m-1)}{m}F + \widehat{\theta}_1 + ... + \widehat{\theta}_n + \gamma_1 +...
  \gamma_n=\frac{(m-1)}{m}F
    +(\frac{n'}{2}-1)F,$$ a contradiction.

 Now, assume that $m>1$ and $mD \in \mathcal{J'}$. Then the multiple fibres in
 $S$ are $\pi^*(mD)$ of multiplicity $2m$ and $\pi^*(F'_j)$ of multiplicity $2$, with $j=1,...,n'-1$.
 Thus
 $$K_S \equiv \frac{(2m-1)}{2m}F + \frac{(n'-1)}{2}F.$$
 As before, $K_X\equiv -F' + \frac{(m-1)}{m}F'+ \theta_1 + ... + \theta_n$ and by
 the double cover formulas we obtain
$$K_S\equiv -F + \frac{(m-1)}{m}F + \widehat{\theta}_1 + ... +
\widehat{\theta}_n + \gamma_1 +...
  \gamma_n=$$
  $$ =\frac{(m-1)}{m}F
    +(\frac{(n'-1)}{2}-1)F + \frac{1}{2m}F,$$
    since $m$ is a natural number we have a contradiction again and the
    result follows.
    \end{proof}

\

 So $\pi^*(F')$ is disconnected and
there is  a natural $2-1$ map $\pi': B\to \mathbb{P}^1$. By using the Hurwitz formula we get
 $$g:=g(B)=\frac{deg R}{2}-1,$$ where $R$ is the ramification divisor.
 Let us recall the commutative diagram (\ref{eq_elliptic})
$$
\begin{CD}\ S@>p>>B\\ @V\pi VV  @VV \pi' V\\ X@>p' >>\mathbb{P}^1
\end{CD}
$$

\

Then keeping the above notation:
\begin{cor}

One has $$n' \leq deg \, R\leq n'+1 $$
\end{cor}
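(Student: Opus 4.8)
The plan is to compute $\deg R$ by counting the branch points of the degree-two cover $\pi':B\to\pp^1$. A point $t\in\pp^1$ is a branch point precisely when its preimage in $B$ is a single point, equivalently when the pull-back $\pi^*(F'_t)$ is connected rather than a disjoint union $F_1+F_2$. Indeed, by commutativity of diagram (\ref{eq_elliptic}) one has $\pi^*\big((p')^*t\big)=p^*\big((\pi')^*t\big)$, and $(\pi')^*t$ equals $b_1+b_2$ over an unramified point (giving two disjoint fibres $p^*b_1+p^*b_2$ in $S$) but $2b$ over a ramification point (giving $2\,p^*b$, supported on one fibre). So the whole problem reduces to deciding, fibre by fibre, whether $\pi^*(F'_t)$ is connected.

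For the lower bound I would invoke Remark \ref{obs3.1}: each of the $n'$ fibres $F'_j\in\mathcal{J}$ pulls back to a divisor of the form $2\Phi_j$ --- namely $2mI_{2r}$, $2III$, $2\tilde{D}_4$ or $2\tilde{E}_6$ --- which is supported on a single fibre $\Phi_j$ of $p:S\to B$. Hence $\pi^*(F'_j)$ is connected and lies over one point $b_j\in B$, so $t_j:=p'(F'_j)$ is a branch point of $\pi'$. As the $n'$ fibres $F'_j$ lie over distinct points $t_j$, this produces $n'$ distinct branch points and gives $\deg R\ge n'$.

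For the upper bound I must control the branch points lying over fibres outside $\mathcal{J}$. Any fibre $F'_t$ with $t\notin\{t_1,\dots,t_{n'}\}$ is disjoint from $C$, so $\pi$ is \'etale over it and $\pi^*(F'_t)$ is the \'etale double cover of $F'_t$ determined by the $2$-torsion class $L|_{F'_t}$ (indeed $2L|_{F'_t}=C|_{F'_t}=\mathcal{O}_{F'_t}$); thus $t$ is an extra branch point exactly when $L|_{F'_t}\ne\mathcal{O}_{F'_t}$. Since $\pi^*(F')=F_1+F_2$ is disconnected for the general fibre, $L|_{F'}$ is trivial generically and the extra branch locus is finite; the crucial claim is that it has at most one point. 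I expect to prove this by showing that $L$ can restrict non-trivially only on a non-reduced fibre of $p'$: a multiple fibre $mD$ carries the normal class $\mathcal{O}_D(D)$, which has order exactly $m$, and it is this torsion that allows $L|_D$ to be a nonzero $2$-torsion element, whereas over a reduced fibre $L$ stays in the identity component of the relative Picard scheme and restricts trivially. As $X'$ is a relatively minimal rational elliptic surface it has at most one multiple fibre (cf. \cite{cossec-dolga}), so at most one extra branch point can occur and $\deg R\le n'+1$.

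The main obstacle is exactly this confinement of the extra branching to the unique multiple fibre. The cleanest rigorous route, which I would pursue, parallels the canonical bundle computation of the previous proposition: write $K_S=\pi^*(K_X+L)$, expand $K_X\equiv -F'+\frac{m-1}{m}F'+\theta_1+\dots+\theta_n$ as there, and compare with the canonical bundle formula $K_S\sim 2g(B)F+\sum_i\frac{m_i-1}{m_i}F$. Matching the multiple-fibre contributions of $S$ --- each reduced fibre of $\mathcal{J}$ giving a reduced fibre of $S$, and the multiple fibre of $X'$ giving either a single multiple fibre over a ramification point or a split pair over an unramified one --- pins down $g(B)$ in terms of $n'$ and $m$, whereupon the Hurwitz relation $g(B)=\deg R/2-1$ delivers the two-sided bound directly. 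As a final consistency check, $\deg R$ must be even, since $\pi'$ is a double cover of $\pp^1$; combined with $\deg R\ge n'$ this forces $\deg R=n'$ when $n'$ is even and $\deg R=n'+1$ when $n'$ is odd.
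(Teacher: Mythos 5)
Your skeleton is the same as the paper's: the lower bound comes from the fact that each of the $n'$ fibres in $\mathcal{J}$ pulls back, by Remark \ref{obs3.1}, to a divisor $2\Phi_j$ supported on a single fibre of $p$, so each $t_j$ is a branch point of $\pi'$; the upper bound should come from the fact that any further branch point lies under a multiple fibre, of which there is at most one. Your lower-bound half is complete and correct, and your reduction of the upper bound (extra branch point over $F'_t$ disjoint from $C$ $\Leftrightarrow$ $L|_{F'_t}$ is a nontrivial $2$-torsion class) is also correct.

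The gap is in the step you yourself flag as the ``main obstacle''. The justification you offer --- that over a reduced fibre $L$ ``stays in the identity component of the relative Picard scheme and restricts trivially'' --- does not work: membership in the identity component does not imply triviality, because $\Pic^0$ of a reduced fibre has plenty of nontrivial $2$-torsion ($\Pic^0$ of a smooth fibre is an elliptic curve, $\Pic^0$ of an $I_r$ fibre is $\C^*$), and such a class \emph{does} define a connected \'etale double cover (a degree-$2$ isogeny, resp.\ $I_{2r}\to I_r$). So torsion of the normal class $\OO_D(D)$ of a multiple fibre is not the only a priori source of $2$-torsion, and your dichotomy needs a real argument; your fallback (matching multiple-fibre contributions in the canonical bundle formula) is only a plan, not a proof. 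Two ways to close the gap. (a) Multiplicities: if $t$ is a branch point with $(\pi')^*t=2b$, commutativity of diagram (\ref{eq_elliptic}) gives $\pi^*(F'_t)=p^*(2b)=2\,p^*(b)$, so every component of $\pi^*(F'_t)$ has even multiplicity; since $F'_t$ is disjoint from $C$, $\pi$ is \'etale near $F'_t$ and preserves multiplicities, hence every multiplicity in $F'_t$ is even, i.e.\ $F'_t$ is a multiple fibre of even multiplicity (this is exactly the paper's parenthetical claim). (b) Complete your $2$-torsion route: since $\pi^*(F')$ is disconnected for general $F'$, $L$ restricts trivially to the general fibre, hence $L$ is linearly equivalent to a vertical divisor $V$; as $L\cdot D_i=0$ for every component $D_i$ of $F'_t$, Zariski's lemma forces the part of $V$ supported on $F'_t$ to be a rational multiple of $F'_t$, which is an \emph{integral} multiple when $F'_t$ is not multiple, and then $L|_{F'_t}=\OO_X(kF'_t)|_{F'_t}=\OO_{F'_t}$. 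Either way, uniqueness of the multiple fibre on the relatively minimal model $X'$ (\cite{cossec-dolga}) then gives $\deg R\leq n'+1$, as both you and the paper conclude.
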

\begin{proof}
 Since $S$ is branched along $C=C_1+...+C_n$, each $F'_j \in \mathcal{J}$ corresponds to
 a ramification point of $\pi'$, so $n'\leq deg \, R$. Also, since $p':X'\to \pp^1$ has at most one multiple fibre,  there is at most one more
ramification point corresponding to a multiple fibre (necessarily of even multiplicity)
 of the elliptic fibration of $X',$ hence the result.
\end{proof}

\

\begin{cor}
The singular fibres of $p$ coming from fibres $F'_j\in \mathcal{J}$ of $X$ are of
type $mI_{2r}$ ($1\leq r\leq 9$, $m\geq 1$),  $III,$ $\tilde{D}_4$ and $\tilde{E}_6$.
\end{cor}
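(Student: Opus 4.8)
The plan is to combine the local computation of the pull-backs $\pi^*(F'_j)$ already recorded in Remark \ref{obs3.1} with the disconnectedness of $\pi^*(F')$ just established, and then simply read off the fibre of $p$ lying over the relevant ramification point. First I would recall that, by Proposition \ref{prop-ellip}, each fibre $\epsilon(F'_j)$ of the relatively minimal model $X'$ is of type $mI_r$ (with $1\le r\le 9$, $m\ge 1$), $II$, $III$ or $IV$. Remark \ref{obs3.1} then gives the corresponding pull-backs
$$\pi^*(F'_j)=2mI_{2r},\quad 2III,\quad 2\tilde{D}_4,\quad 2\tilde{E}_6,$$
so that in every case $\pi^*(F'_j)$ is twice an effective divisor of one of the four listed types.

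Next I would exploit the commutativity of diagram (\ref{eq_elliptic}) together with the preceding proposition, which shows that $\pi^*(F')=F_1+F_2$ is disconnected and hence that $\pi':B\to\mathbb{P}^1$ is a genuine degree-two map. Writing $c_j=p'(F'_j)\in\mathbb{P}^1$, the earlier corollary identifies $c_j$ as a branch point of $\pi'$, since $F'_j$ contains branch curves; consequently $(\pi')^*(c_j)=2b_j$ for the unique point $b_j\in B$ lying over $c_j$. Because $F'_j=(p')^*(c_j)$ and the diagram gives $p'\circ\pi=\pi'\circ p$, I obtain
$$\pi^*(F'_j)=\pi^*(p')^*(c_j)=p^*(\pi')^*(c_j)=2\,p^*(b_j),$$
so that the fibre of $p$ over $b_j$ equals $\tfrac12\pi^*(F'_j)$. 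Comparing with the list above, halving the four expressions yields fibres of type $mI_{2r}$, $III$, $\tilde{D}_4$ and $\tilde{E}_6$, which is exactly the assertion.

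The one point requiring care is that the factor $2$ appearing in Remark \ref{obs3.1} is genuinely accounted for by the simple ramification of $\pi'$ over $c_j$, rather than reflecting a multiple fibre or a non-reduced structure supported on a single fibre of $p$. This is precisely what the disconnectedness $\pi^*(F')=F_1+F_2$ guarantees: a general fibre splits into two reduced fibres of $p$, so over the branch point the two sheets coalesce and $\pi^*(F'_j)$ is supported on a single honest fibre $p^*(b_j)$ taken with multiplicity two. Once this is established, the identification of the fibre types is immediate, and I expect no further obstacle beyond checking that the multiplicity $m$ is preserved under halving (it is, since $m$ enters both $F'_j$ and $\pi^*(F'_j)$ as an overall factor).
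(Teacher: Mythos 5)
Your proof is correct and takes essentially the same route as the paper, whose proof simply reads off the fibre types from Remark \ref{obs3.1}. In fact you are somewhat more careful than the paper: you justify explicitly, via the commutativity of diagram (\ref{eq_elliptic}) and the ramification of $\pi'$ over $p'(F'_j)$, why the factor $2$ in $\pi^*(F'_j)$ is absorbed by the base change (so the fibre of $p$ is $\tfrac12\pi^*(F'_j)$, giving type $mI_{2r}$ rather than $2mI_{2r}$), a point the paper's one-line proof leaves implicit.
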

\begin{proof}
Using  Remark \ref{obs3.1}, notice that
if $\epsilon(F'_j)$ is a fibre type $mI_r$,
 then $\pi^*(F'_j)=2m(\sum_1^{r} \gamma_i +
 \hat{\theta}_i)$. Therefore we obtain in $S$ a fibre of type $mI_{2r}$ with $1\leq r\leq 9$ and $m\geq 1$.
 If $\epsilon(F'_j)$ is type $II$, $III$, or $IV$, then
$\pi^*(F'_j)=2F$, with $F$ a
fibre of type $III,$ $\tilde{D}_4$ and $\tilde{E}_6$ respectively.
\end{proof}

\

\noindent {\it Examples.}
 \begin{itemize}
\item [-] We take a pencil
 of cubics with  $12$ nodal cubics, blowing up the base points and these $12$ double points we will obtain a rational elliptic surface
 with $K^2_X=-12$ and $n=12$.
 The double cover $S$ will be an elliptic surface. In a similar way,
  we can obtain examples
  for $n\geq 4$ even.

  \item [-] Using  the program Magma, we can find a pencil of sextics with $4$ nodal sextics. Blowing up these double points and the base points, we get examples for $n=2$, $3$ or $4$.
 \end{itemize}

\

\section{Some remarks on the case of general type}

\

Suppose  now that $S$ is a surface of general type.
From \cite{Miya1},  $K_S^2\leq 9 \mathcal{X}(\mathcal{O}_S)$  and so $K^2_S\leq
18$.

Recalling  that $K_S^2=2(K_X^2+n)$ and
$-n<K_X^2\leq -1$, so

\begin{equation}\label{eq_general_type}
2\leq K_S^2\leq 2(n-1), \quad
2\leq h^0(2K_X + C)\leq n \quad \text{and} \quad n\geq 2.
\end{equation}

\

We do not know if this case can happen unlike in the previous cases we know examples.
Below we give some properties for this situation. More precisely,
throughout this section we will prove:

\begin{prop}\label{prop_tipo_gral}
Suppose that $S$ is a surface of general type. One of the following holds:
\begin{itemize}
\item  if $S$ is regular, then
$2\leq K_S^2\leq 8$
and $2\leq n \leq 9$;
\item if $S$ is irregular, then $q(S)=1$. Also,
 $6\leq K_S^2\leq 10$ and $4\leq n\leq 9$.
\end{itemize}
\end{prop}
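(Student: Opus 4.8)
The plan is to read off the invariants of $S$ from the involution $\sigma$ and the splitting $\pi_*\mathcal{O}_S=\mathcal{O}_X\oplus\mathcal{O}_X(-L)$, then to combine the Bogomolov--Miyaoka--Yau inequality with Miyaoka's bound on $(-2)$-curves (Proposition \ref{Miyaoka}) and with a study of the fibration produced by the Albanese map. Since $X$ is rational, the splitting gives
\[
q(S)=h^1(\mathcal{O}_S)=h^1(X,-L),\qquad p_g(S)=h^2(\mathcal{O}_S)=h^2(X,-L)=h^0(K_X+L),
\]
so that $1\le p_g(S)\le n$ by Proposition \ref{propriedades}, while $\chi(\mathcal{O}_S)=2$ forces $p_g(S)=q(S)+1$. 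I would also record $K_S^2=2(K_X+L)^2=2(K_X^2+n)$ and, by Noether's formula, $c_2(S)=12\chi(\mathcal{O}_S)-K_S^2=24-K_S^2$; with $-n<K_X^2\le-1$ these already yield the estimates \eqref{eq_general_type}.

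The first genuinely geometric step is to show $q(S)\le1$. The invariant part of the $\sigma$-action on $H^0(\Omega^1_S)$ is $H^0(\Omega^1_X)=0$, since $X$ is rational, so $\sigma^*=-\mathrm{id}$ on $H^0(\Omega^1_S)$ and the induced action on $\Alb(S)$ is $-1$ (honestly, not merely up to translation, because $\sigma$ has fixed points along $\pi^{-1}(C)$). If $q(S)\ge2$ I would look at the Albanese map $a\colon S\to A:=\Alb(S)$. If $\dim a(S)=2$, then $a$ descends to a dominant rational map $X=S/\sigma\dashrightarrow A/\{\pm1\}$ onto a Kummer-type surface of Kodaira dimension $0$, contradicting $\kappa(X)=-\infty$. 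If $\dim a(S)=1$, the image is a curve generating $A$, producing a fibration of base genus $\ge q(S)$ whose quotient by the induced involution must be $\mathbb{P}^1$; combined with $\chi(\mathcal{O}_S)=2$ and the nefness of $K_S$ this is to be ruled out. I would conclude $q(S)\le1$, so that in the irregular case $q(S)=1$ and $p_g(S)=2$.

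For the coarse numerics I would use that each $\pi^{-1}(C_i)$ is a $(-2)$-curve and that $K_S$ is nef, so Proposition \ref{Miyaoka} applied to $S$ gives $3c_2(S)-K_S^2\ge\tfrac92 n$. Substituting $c_2(S)=24-K_S^2$ and $K_S^2=2(K_X^2+n)$ yields
\[
16K_X^2+25n\le 144 .
\]
Together with $K_X^2\le-1$ (Proposition \ref{classification}) and $K_S^2=2(K_X^2+n)\ge2$, this confines $(K_X^2,n)$ to a finite set and in particular gives $K_S^2\le10$.

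The sharp bounds then come from the fibration structure, and this is where I expect the real difficulty. In the irregular case $q(S)=1$ the Albanese map is a fibration $f\colon S\to E$ onto an elliptic curve on which $\sigma$ acts as $-1$; the quotient is a genus $g$ fibration $X\to E/\{\pm1\}=\mathbb{P}^1$, with the $(-4)$-curves concentrated in the fibres over the four branch points. Since the base is elliptic, $\deg f_*\omega_{S/E}=\chi(\mathcal{O}_S)=2>0$, so $f$ is non-isotrivial and the slope inequality $K_S^2=K_{S/E}^2\ge\tfrac{4(g-1)}{g}\cdot2$ forces $K_S^2\ge6$ (once $g=2$ is excluded), whence $n\ge4$. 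The remaining inequalities — the bound $n\le9$ in both cases, $K_S^2\le8$ in the regular case and $K_S^2\le10$ in the irregular one — require classifying the admissible singular fibres carrying the $(-4)$-curves and counting their Euler characteristics, exactly as in Proposition \ref{prop-ellip} and its corollary for the elliptic case, with regularity being what excludes the configurations producing $K_S^2=10$. I expect the main obstacles to be precisely this fibre-by-fibre bookkeeping and the exclusion of $q(S)\ge2$ when the Albanese image is a curve.
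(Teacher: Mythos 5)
Your proposal breaks down at the two places where the actual work of this proposition happens, and in both cases the route you sketch cannot be completed as described.

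First, the upper bounds. You claim that $n\le 9$ (both cases) and $K_S^2\le 8$ (regular case) ``require classifying the admissible singular fibres carrying the $(-4)$-curves\dots exactly as in Proposition \ref{prop-ellip}.'' But in the regular case there is no fibration at all: $S$ is a regular surface of general type, its Albanese map is trivial, and neither $S$ nor $X$ carries a distinguished pencil, so there are no fibres to classify and the argument cannot even start. The paper's actual mechanism is topological, not fibrational: since $\pi$ is a finite degree-$2$ morphism, $\pi^*:H^2(X,\mathbb{R})\to H^2(S,\mathbb{R})$ is injective, and because $X$ is rational its image lies in the $(1,1)$-part, so $b_2(X)=10-K_X^2\le h^{1,1}(S)=b_2(S)-2p_g(S)$. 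Combining this with Noether's formula $c_2(S)=24-K_S^2$ and $K_S^2=2(K_X^2+n)$, one gets case by case (for $K_S^2=2,4,6,8,10$) lower bounds on $K_X^2$, hence the bounds $n\le 9$, the exclusion of $K_S^2=10$ when $q(S)=0$, and likewise $n\le 9$ when $q(S)=1$, $p_g(S)=2$. None of this appears in your proposal, and your Miyaoka inequality $16K_X^2+25n\le 144$ (which is correct and does give $K_S^2\le 10$, as in the paper's Step 2) is too weak to produce these refinements.

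Second, the irregular case hinges on excluding fibre genus $g=2$, and you defer this twice (``this is to be ruled out,'' ``once $g=2$ is excluded'') without ever doing it; yet both conclusions $q(S)=1$ and $K_S^2\ge 6$ depend on it. The paper's argument is short but essential: a genus-$2$ fibration has no multiple fibres, so each fibre $F_j$ of $f':X\to\pp^1$ lying over a branch point of $\pi'$ must contain some of the $(-4)$-curves with odd multiplicity, $F_j=C_1+\cdots+C_s+2D$; then $K_XF_j=2$ gives $K_XD=1-s$, while $C_iF_j=0$ gives $C_iD=2$ and $DF_j=0$ gives $D^2=-s$, so $K_XD+D^2=1-2s$ is odd, contradicting the adjunction formula. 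This kills $g=2$, hence kills $q=2$ (where Debarre's $K_S^2\ge 8(g-1)(q-1)\le 10$ and the slope inequality force $g=2$), and makes Horikawa's $K_S^2\ge\frac{8}{3}\chi(\OO_S)$ (or your slope computation) applicable to get $K_S^2\ge 6$. Your reduction to $q(S)\le 1$ is also gappy on its own terms: the descent argument identifies the quotient with a Kummer surface only when $q=2$; for $q\ge 3$ the image of $X$ is a proper $(-1)$-stable subvariety of $A/\{\pm 1\}$ whose Kodaira dimension is not obviously $\ge 0$, and the one-dimensional Albanese-image case with $q\ge 2$ is pure assertion. The paper avoids all of this by quoting De Franchis (the Albanese image of a double cover of a rational surface is a curve) and then Debarre's inequality to confine $q$ to $\{1,2\}$ before running the $g=2$ exclusion.
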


\begin{proof}
We divide the proof into steps.\\

\textbf{Step 1:} \textit{ $h^0(2K_X + C)=h^0(2K_X+L)= K_X^2 + n+1.$}\\
By the Riemann-Roch theorem
$$h^0(2K_X + C)= K_X^2 + n+1 + h^1(2K_X+C).$$
The
projection formula
$h^1(2K_S)=h^1(2K_X+C) + h^1(2K_X +L),$
together with
$h^1(2K_S)=0$, gives  $h^1(2K_X+C)=0$ and so $h^0(2K_X + C)= K_X^2 + n+1.$
Since $K_X+L$ is effective, nef and big, then $h^1(-K_X-L)=0$ (see \cite{reid0})  and thus
$h^0(2K_X+L)=K_X^2 +n+1$ as asserted.

\

\textbf{Step 2:} \textit{The canonical divisor satisfies $2\leq K^2_S \leq 10$ and $2\leq n\leq 14$.}\\
If $K_S^2=18$, by the Noether's formula one has $c_2(S)=6$ and applying Miyaoka's formula
(Proposition \ref{Miyaoka}),
we obtain $n=0$: also for $K_S^2=16$
 we obtain $n=1$.
Then $K_S^2\leq 14$.

Similarly, if $K_S^2=14$, then $n\leq 3$. Then we have  a contradiction by the inequalities (\ref{eq_general_type}).
The same argument proves that $K_S^2\neq 12$. Then $K_S^2\leq 10$.

If $K_S^2=10$ then $n\leq 7$. From Proposition \ref{classification}, one gets $K_X^2\leq -1$, whence
 the only possibility is $6\leq n \leq
7$. For $n=7$ we have $K_X^2=-2$ and for $n=6$ we have $K_X^2=-1$. In
the same way,
if $K_S^2=8$ then $5\leq n \leq 8$,
if $K_S^2=6$ then $4\leq n \leq
10$, if $K_S^2=4$ then $3\leq n \leq 12$, and finally if $K_S^2=2$,
then $2\leq n\leq 14$.

\

From now on, we are going to analyse separately the cases when $S$ is regular
and irregular.

\

\textbf{Step 3:} \textit{If $S$ is a regular  surface of general type, then
$2\leq K_S^2\leq 8$ and $2\leq n \leq 9$.}\\
The hypothesis  $q(S)=0$  implies
$b_2(S)=c_2(S)-2$ and $p_g(S)=1$, hence $b_2(S)=22-K_S^2$. Since $\pi: S \to X$ is an holomorphic map of
degree 2, then
$$\pi^*:H^2(X,\mathbb{R}) \to H^2(S,\mathbb{R})$$
is an
injective ring homomorphism. We have $b_2(X)=h^{1,1}(X)$ and
$$b_2(S)=h^{2,0}(S) + h^{1,1}(S) + h^{0,2}(S),$$ where
$h^{2,0}(S)=h^{0,2}(S)=1$. Since  $h^{1,1}(X)\leq h^{1,1}(S)$, one has
$$ b_2(X) \leq b_2(S) - 2.$$

Note that, since $X$ is a rational surface, $b_2(X)= 10-K_X^2$.
If $K^2_S=2$, then  $b_2(S)=20$ and that implies $K^2_X\geq
-8$.  In conclusion, $K^2_X\geq -8$ and from $K_S^2=2(K_X^2+n)$,  $n\leq 9$.

 Likewise, for $K^2_S=4$ one has $K^2_X\geq -6$ and $n\leq 8$; for $K^2_S=6$ one
  has $K^2_X\geq -4$
 and $n \leq 7$;
for $K^2_S=8$ one has $K^2_X\geq -2$ and $n\leq 6$; finally $K^2_S\neq 10$.

\

\
\textbf{Step 4:} \textit{If $S$ is an  irregular  surface of general type, then $S$ is not of Albanese general type and the genus of a general fibre of the Albanese fibration is $>2$.
}\label{lema5.2}

Suppose that  $q(S)\geq 1$. Then $p_g(S)=q +1 \geq 2$ and, since an irregular  surface satisfies $K^2\geq 2p_g$ by (\cite{debarre}),
$K^2_S\geq 4$.

Since $S$ is a double covering of a surface with $p_g(X)=q(X)=0$
 and $q(S)>0$,  by the De-Franchis theorem (\cite{defra}) $S$ is not of Albanese general type, and so we can consider
the Albanese fibration
$$f:S\to B,$$
where $q(S)$ is the genus of $B$. We
denote by $g$ the genus of a
general fibre of $f$ and write $q:=q(S)$.
We have the following commutative diagram:
$$
\begin{CD}\ S@>f>>B\\ @V\pi VV  @VV \pi' V\\ X@>f' >>\mathbb{P}^1
\end{CD}
$$
 where $\pi'$ is a 2:1 map with $deg R=2q + 2$,
 where $R$ is the ramification divisor.

By the  appendix of \cite{debarre}
$$K_S^2\geq 8(g-1)(q-1).$$

Since, by Step  2, $K_S^2\leq 10$, the only possibilities are   $q=1$, $g\geq 2$ and $deg R=4$,  or $q=2$, $g=2$ and
$deg R=6$.
In this last  case  the slope inequality (\cite{xi2})
         $$
         \frac{4(g-1)}{g}\leq
         \frac{K_S^2-8(g-1)(q-1)}{\mathcal{X}(\mathcal{O}_X)-(g-1)(q-1)}\leq
         12
         $$
yields  $K_S^2=10$ and thus  $6\leq n \leq 7$.

\

Assume in either case  that $g=2$.
Then $f$ and $f'$ are fibrations of genus $2$ with $2q +2$
fibres of $f'$ corresponding to the ramification points of $\pi'$. Denote by
$F_1,...,F_{2q+2}$ those fibres of $f'$ and let $\pi^*(F_i)=2\tilde{F}_i$,
where $\tilde{F}_i$ is a fibre of $f$.
Since a
fibration of genus $2$
 does not have multiple fibres all the fibres $F_j$, $j=1,...,2q+2$, have to
 contain some  of the $(-4)$-curves and  all its other
components will appear with  even multiplicity.  So we can write  $F_j = C_1 +...+C_s+ 2D$, where  $C_1,...,C_s$ are
 $(-4)$-curves in $C$ appearing with odd multiplicity in $F_j$ and $D$ is an effective divisor. Since $K_X(C_1 +...+C_s)=2s$ and $K_XF_j=2$, by the assumption $g=2$, then
$K_XD=1-s$. Now, $C_iF=0$ implies $C_iD=2$, and so from $DF=0$, one obtains
 $2D^2=-2s$, this is  $D^2=-s$. But then $K_XD+D^2=1-2s$, and this contradicts the
adjunction formula.
So we can conclude that $g>2$.

\

\textbf{Step 5:} \textit{If $S$ is irregular, then $q(S)=1$. Also,
 $6\leq K_S^2\leq 10$ and $4\leq n\leq 9$.}\\
 By the previous step $g>2$, whence $q=1$.

Notice that a surface with an Albanese fibration with $g\neq
2$ satisfies $K^2\geq \frac{8}{3}\mathcal{X}(\mathcal{O})$ (\cite{ho3}) and thus $K_S^2\geq 6$.

Finally, as we have seen for the regular case, we have $h^{1,1}(X)\leq
h^{1,1}(S)$ and since $q(S)=1$, one has $b_2(S)=c_2(S) +2$; also,
$b_2(S)=h^{1,1}(S)+4$, then $b_2(X)\leq c_2(S)-2$. Applying this
inequality for $K_S^2=6$,
we obtain $n < 10$.
\end{proof}

\


\bigskip

\bigskip

\begin{minipage}{13cm}
\parbox[t]{6cm}{Mar\'ia Mart\'i S\'anchez\\
Center for Mathematical Analysis, Geometry and Dynamical Systems\\
Instituto Superior T\'ecnico\\
Universidade T{\'e}cnica de Lisboa\\
Av.~Rovisco Pais\\
1049-001 Lisboa, PORTUGAL\\
mmartisanchez@educa.madrid.org
} \hfill
\end{minipage}

\end{document}